\pdfoutput=1



\documentclass{amsart}

\usepackage{graphicx}

\usepackage{amscd}

\newtheorem{theorem}{Theorem}[section]

\newtheorem{proposition}[theorem]{Proposition}
\newtheorem{assumption}[theorem]{Assumption}
\theoremstyle{definition}

\theoremstyle{remark}
\newtheorem{remark}[theorem]{Remark}

\numberwithin{equation}{section}
\newcommand{\genindex}{\bullet}
\newcommand{\hodge}{\ast}
\newcommand{\xqedhere}[2]{%
  \rlap{\hbox to#1{\hfil\llap{\ensuremath{#2}}}}}
\begin{document}

\title[Structure-preserving mesh coupling]{Structure-preserving mesh coupling based on the Buffa-Christiansen complex}


\author[O.~Niemim\"aki]{Ossi Niemim\"aki}
\address[O.~Niemim\"aki]{Tampere University of Technology\\
		DEE -- Electromagnetics\\
		P.O. Box 692\\
		33101 Tampere\\
		Finland}
\curraddr{University of Helsinki\\
		 Department of Mathematics and Statistics\\
		 P.O. Box 68\\
		 00014 University of Helsinki\\
		 Finland}
\email{ossi.niemimaki@helsinki.fi}

\author[S.~Kurz]{Stefan Kurz}
\address[S.~Kurz]{Tampere University of Technology\\
		DEE -- Electromagnetics\\
		P.O. Box 692\\
		33101 Tampere, Finland}
\curraddr{Technische Universit\"at Darmstadt\\
		 Graduate School Computational Engineering\\
		 Dolivostra{\ss}e 15\\
		 64293 Darmstadt\\
		 Germany}
\email{kurz@gsc.tu-darmstadt.de}

\author[L.~Kettunen]{Lauri Kettunen}
\address[L.~Kettunen]{Tampere University of Technology\\
		DEE -- Electromagnetics\\
		P.O. Box 692\\
		33101 Tampere\\
		Finland}
\email{lauri.kettunen@tut.fi}



\date{\today}


\begin{abstract}
The state of the art for mesh coupling at nonconforming interfaces is presented and reviewed. Mesh coupling is frequently applied to the modeling and simulation of motion in electromagnetic actuators and machines. The paper exploits Whitney elements to present the main ideas. Both interpolation- and projection-based methods are considered. In addition to accuracy and efficiency, we emphasize the question whether the schemes preserve the structure of the de Rham complex, which underlies Maxwell's equations. As a new contribution, a structure-preserving projection method is presented, in which Lagrange multiplier spaces are chosen from the Buffa-Christiansen complex. Its performance is compared with a straightforward interpolation based on Whitney and de Rham maps, and with Galerkin projection.\\
\end{abstract}

{\let\thefootnote\relax\footnotetext{Some figures are omitted due to a restricted copyright. Full paper to appear in Mathematics of Computation.}}

\maketitle
%
\section{Introduction}
This paper deals with mesh coupling at nonconforming interfaces. Such situations frequently occur in the modelling of motion, when different submeshes are sliding with respect to each other. More generally, mesh coupling can relax strict conformity requirements during mesh generation, or combine submodels that have been created independently. Our motivation comes from computational electromagnetics, where the most common coupling methods are based either on interpolation \cite{Niu2012,Shi2008}, or on projection by means of Lagrange multipliers \cite{Lange2012,Rodger1990}, also known as mortar element methods \cite{Rapetti2000}.\par
Mesh coupling methods should be accurate, efficient and preserve the key structures. Accurate in that their convergence rate should not deteriorate the convergence of the numerical schemes in the adjacent domains, and efficient so that the additional numerical effort should be acceptable. Specifically in the context of electromagnetics, the structure of the de Rham complex should be preserved, for it is crucial to Maxwell's equations. For example, the property of being a gradient field should be retained while passing from one mesh to another. This is comparable to geometric multigrid, where structure-preserving restriction and prolongation operators were discussed, e.g., in \cite{Clemens2004a}.
\section{Setting}
\begin{figure}
\centering
\includegraphics[width=0.4\linewidth]{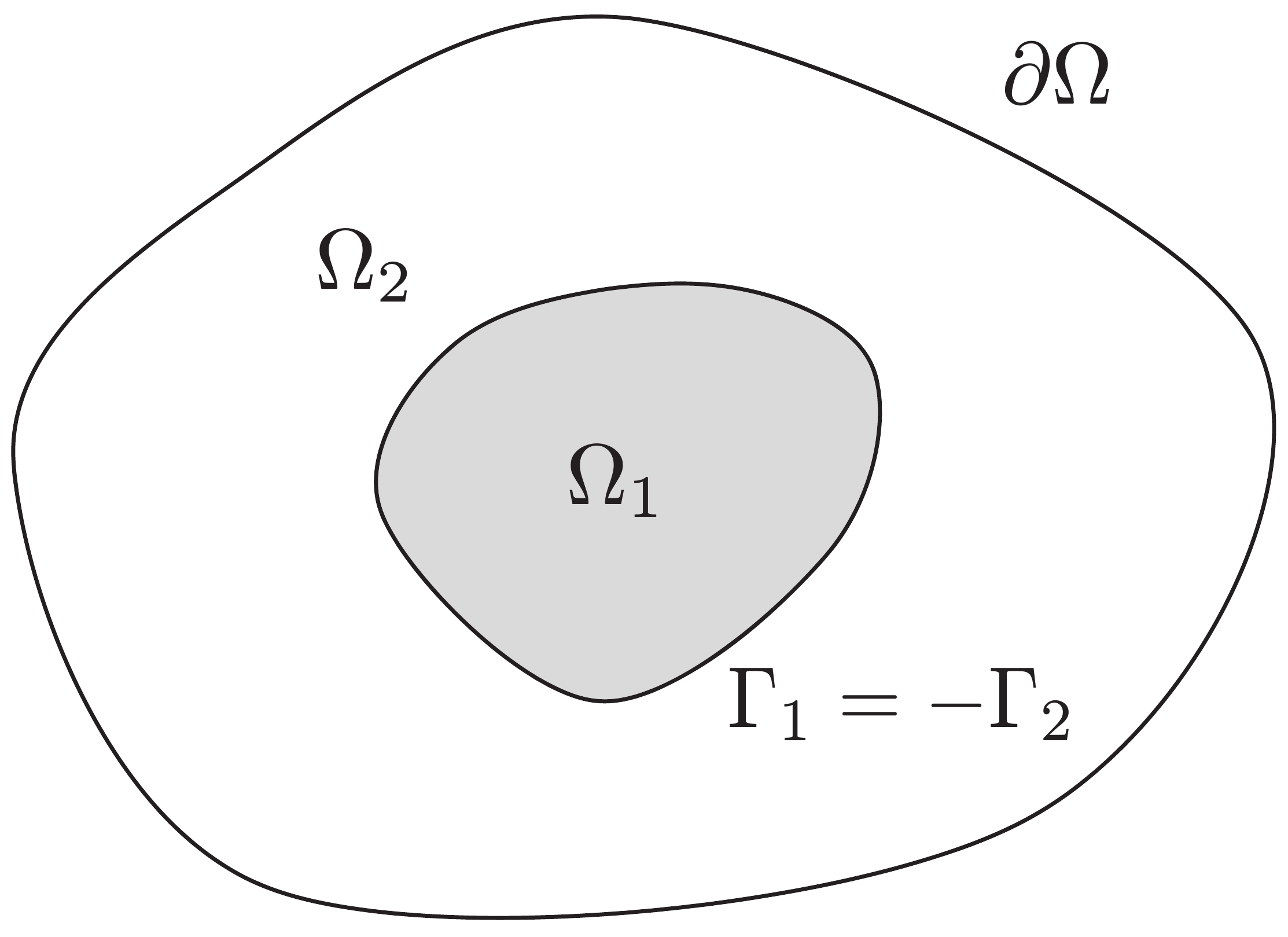}
\caption{\label{fig_Model_Onion}The model problem consists of 3D domain $\Omega$, which is partitioned in two nonoverlapping sub-domains $\Omega_1$ and $\Omega_2$. For simplicity, we assume an onion-type partitioning, i.e., $\partial\Omega$ and $\partial\Omega_1$ are disjoint. For the coupling interface, we let $\Gamma_1=\partial\Omega_1$, $\Gamma_2=\partial\Omega_2\setminus\partial\Omega$. The partitioning is geometrically conforming, such that in terms of boundary orientation $\Gamma_1=-\Gamma_2$. The interface is a curvilinear Lipschitz polyhedron.}
\end{figure}
The setting of the three-dimensional model problem is depicted in Fig.~\ref{fig_Model_Onion}. We consider the trace spaces $\mathcal{W}^r_i$ of Whitney $r$-forms ($r=0,1,2$) on the interface related to domain $\Omega_i$ ($i=1,2$). The cases $r=0,1,2$ correspond to nodal, edge, and facet elements, respectively. The discretizations are assumed nonconforming, that is, $\mathcal{W}^r_1\ne\mathcal{W}^r_2$. We are looking for a family of {\em mesh coupling operators} $Q^r_{ji}:\mathcal{W}^r_i\to\mathcal{W}^r_j$, $ji\in\{12;21\}$, such that $Q^r_{ij}\circ Q^r_{ji}$ converges to the identity mapping in some well-defined sense as the meshes are refined ({\em approximation property}). Moreover, we require $\mathrm{d}\circ Q^r_{ji}=Q^{r+1}_{ji}\circ\mathrm{d}$, $r=0,1$, where the exterior derivative $\mathrm{d}$ represents the surface gradient ($r=0$) or surface curl ($r=1$), respectively ({\em commuting property}). This way, the structure of the de Rham complex is preserved when projecting from one mesh to the other.
\subsection{Strong coupling: interpolation methods}
In a straightforward approach the interface data is interpolated by the Whitney map on one mesh and then integrated on the other mesh with the de Rham map. Since both Whitney and de Rham maps commute with the exterior derivative, this approach enjoys the commuting property. The implementation is relatively straightforward, and the numerical effort grows linearly with the number of interface degrees of freedom. However, in general the method exhibits poor convergence of the approximation property, when compared to convergence rates that can be achieved by finite element methods in the adjacent domains \cite[p.~292]{Canuto2007}, \cite[Sect.~1.4]{Flemisch2007}. This has been confirmed in \cite[Fig.~6]{Shi2008} for nodal elements, where a mortar element method was used as baseline. On the other hand, numerical experiments in \cite{Journeaux2014} indicate that this problem is less pronounced for edge and facet elements.
\subsection{Weak coupling: projection methods}\label{sec_projection}
Projection methods define the coupling operators by orthogonalizing the residual with respect to a Lagrange multiplier space. For nodal elements the space $\mathcal{W}^0$ is usually chosen for this purpose.\par
Consider finite element formulations in domains $\Omega_{1,2}$, to be coupled via the interface $\Gamma_1=-\Gamma_2$. The weak continuity condition characterized by $Q^r_{ji}$ can be directly taken into account as a constraint in the construction of the global variational space. This yields a symmetric positive definite finite element system. Alternatively, one may work with the unconstrained space and take into account the weak continuity condition by Lagrange multipliers. This yields a symmetric indefinite saddle point problem. In \cite{Lange2012} it is explained how the symmetric positive definite formulation can be restored on the algebraic level, by eliminating the Lagrange multipliers algebraically.\par
Projection methods require the inversion of a mass matrix. This can be done efficiently by using biorthogonal bases for the Lagrange multiplier space \cite{Lange2012,Wohlmuth2002}, resulting in a diagonal mass matrix. Nevertheless, projection methods are demanding to implement since discontinuous functions have to be numerically integrated. Strategies to cope with this are presented in \cite[Fig.~9]{Bouillault2003}, \cite{Gander2013} and \cite[Sect.~3]{Journeaux2013a}.\par
For completeness, we also mention Nitsche-type mortaring, where an additional independent function space is introduced on the interface. This approach has been recently generalized to Maxwell's equations \cite{Hollaus2010}.
\begin{remark}\label{rem_mortar}
In our model problem the interface is a manifold without boundary. Not so if the interface intersects the exterior boundary or in case of several sub-domains. The definition of the Lagrange multiplier space becomes more involved then, even for nodal elements. It cannot be chosen as one of the trace spaces, but only as a subspace of it, see \cite{Bernardi2005} for details.
\end{remark}
\subsection{From nodal elements to edge elements}
The generalization of projection-based methods from nodal to edge elements is not obvious. In particular, only a few references aim at a rigorous theoretical analysis of such mortar element methods \cite{Belgacem2001,Hu2008}. Many authors again use the finite element space $\mathcal{W}^1$ as the Lagrange multiplier space, e.g.~\cite{Bouillault2003,Buffa2003c,Hoppe1999}. Unfortunately, this policy does not yield a structure-preserving discretization. As a remedy, a div-conforming space rather than the curl-conforming space $\mathcal{W}^1$ should be used as Lagrange multiplier space. An obvious choice is the lowest order Raviart-Thomas space, which is, in fact, the space of div-conforming Whitney forms on a 2D manifold. However, this does not yield a stable discretization of the $L^2$ inner product \cite[Sect.~1]{Buffa2007}. In practice, this means that the mass matrix may become (nearly) singular. Consequently, non-existence of biorthogonal bases has been proven in \cite[Sect.~VI]{Lange2012}.\par
This problem can be avoided by using a so-called {Buffa-Christiansen (B-C) space} as Lagrange multiplier space \cite{Buffa2007}, \cite[Sect.~4]{Smirnova2013}. B-C spaces consist of certain subspaces of Whitney forms that are defined on the barycentric refinement. B-C spaces form a complex, which is dual to the Whitney complex, featuring a stable discretization of the duality pairing. So far, the B-C complex has been successfully applied to establish a multiplicative Calder\'on preconditioner for the Electric Field Integral Equation, resulting in a dramatic speedup of the iterative convergence \cite[Fig.~10]{Andriulli2008}. Application to nonconforming mesh coupling is a new proposal, according to our best knowledge.\par
The rest of the paper is organized as follows. In Sect.~\ref{sec_BC_main} we present a systematic construction of the B-C complex. After fixing definitions and notations in Sect.~\ref{sec_BC_notation}, the complex is constructed in Sect.~\ref{sec_BC_construction} and further characterized in Sect.~\ref{sec_BC_properties} by its main properties. Sect.~\ref{sec_open_interface} generalizes the construction to a B-C complex over a manifold with boundary. The mesh coupling operators $Q^r_{ji}$ are presented and their properties discussed in Sect.~\ref{sec_coupling_operators}. Sect.~\ref{sec_experiment} is devoted to a numerical experiment, where we compare simple interpolation and two different projection-based methods to each other. 
\section{The Buffa-Christiansen complex}\label{sec_BC_main}
\subsection{Definitions and notation}\label{sec_BC_notation}
Denote by $\mathcal{T}_i$ the restriction of the finite element mesh in domain $\Omega_i$ to the coupling interface of dimension $n=2$, with ordered sets of nodes $\mathcal{T}_i^0$, edges $\mathcal{T}_i^1$ and facets $\mathcal{T}_i^2$, $i=1,2$. In the sequel, we suppress the domain indices `$1$' and `$2$', since the same construction applies to either of the interface meshes. The sets $(\mathcal{T}^r,\partial)$, $r=0,1,2$, form a simplicial complex.\footnote{We do not distinguish cells in $\mathcal{T}^r$ from their geometric realizations.} The corresponding Whitney complex is $(\mathcal{W}^r,\mathrm{d})$, $r=0,1,2$, with standard bases $\boldsymbol{\lambda}^r=(\boldsymbol{\lambda}^r_t)$, indexed by $t\in\mathcal{T}^r$. The barycentric refinement of $\mathcal{T}$ is $\widetilde{\mathcal{T}}$, with corresponding Whitney complex $(\widetilde{\mathcal{W}}^r,\mathrm{d})$, and standard bases $\tilde{\boldsymbol{\lambda}}^r=(\tilde{\boldsymbol{\lambda}}^r_w)$, $w\in\widetilde{\mathcal{T}}^r$. The B-C complex is a subcomplex $(\mathcal{B}^r,\mathrm{d})\subset(\widetilde{\mathcal{W}}^r,\mathrm{d})$.\par
Let $\mathcal{V}$ be the barycentric dual of $\mathcal{T}$.
\begin{enumerate}
\item Cells $v\in\mathcal{V}^q$ are in one-to-one correspondence with cells $t\in\mathcal{T}^r$, $r+q=2$.
We write
\begin{equation}\label{defstar}
\star:\mathcal{T}^r\xrightarrow{\sim}\mathcal{V}^q:t\mapsto v\,.
\end{equation}
Sets $\mathcal{V}^q$ are considered ordered, with the order induced from $\mathcal{T}^r$ by $\star$. We fix the inner orientations in the dual mesh by requiring that $(\star t,t)$ is in the orientation of the interface.
\item Cells $v\in\mathcal{V}^q$ can be expressed as formal linear combinations (chains) of cells $w\in\widetilde{\mathcal{T}}^q$,
\begin{equation}\label{chain}
v=\sum_{w\in\widetilde{\mathcal{T}}^q}c_{vw}^qw,\quad c_{vw}^q\in\{-1;0;1\}\,.
\end{equation}
\item We will construct bases $\boldsymbol{\mu}^q=(\boldsymbol{\mu}^q_v)$ for $\mathcal{B}^q$, indexed by $v\in\mathcal{V}^q$, by linear combination,
\begin{equation}\label{lincomb}
\boldsymbol{\mu}^q_v=\sum_{w\in\widetilde{\mathcal{T}}^q}R_{vw}^q\tilde{\boldsymbol{\lambda}}^q_w,\quad v\in\mathcal{V}^q\,.
\end{equation}
Once the coefficients $R_{vw}^q\in\mathbb{R}$ are fixed we let $\mathcal{B}^q=\mathrm{span}\,\boldsymbol{\mu}^q$.
\end{enumerate}
Roughly speaking, the B-C complex mimics the properties exhibited by the Whitney complex. In addition, the spaces $\mathcal{W}^r$ and $\mathcal{B}^q$ are in stable duality (Prop.~\ref{prop_infsup} below).
\subsection{Construction of the B-C complex}\label{sec_BC_construction}
The construction of the B-C complex boils down to selecting the coefficients $R_{vw}^q$ in \eqref{lincomb}. In the sequel, we will establish a set of minimal assumptions which uniquely determine $R_{vw}^q$.
The resulting B-C spaces agree with those communicated in \cite{Buffa2007} where the treatment relies on classical vector analysis. However, the differential geometric framework adopted here makes it easier to separate between topological and geometric aspects of the construction. In particular, we will see that the construction relies solely on the topology of the mesh and can be done without invoking a metric.\par
Let
\[
\Gamma_h=\bigcup\limits_{t\in\mathcal{T}^2}\overline{t}
\]
be the discretized coupling interface, where $\overline{t}$ is the closure of $t$. We assume that the interface $\Gamma_h$ has no boundary, $\partial\Gamma_h=0$. This corresponds to the situation depicted in Fig.~\ref{fig_Model_Onion}. The extension to interfaces with boundary is treated in Sect.~\ref{sec_open_interface}.\par
\begin{assumption}\label{ass_support}
(Local support) The support of the B-C basis forms is defined recursively. Let $U^q_v=\mathrm{supp}(\boldsymbol{\mu}^q_v)$, $v\in\mathcal{V}^q$, $0\le q\le 2$. We let $U^2_v=\overline{v}$, and
\[
U_v^q=\!\!\!\!\!\!\bigcup_{\{u\in\mathcal{V}^{q+1}\,|\,v\in\partial u\}}\!\!\!\!\!\!U_u^{q+1}\,,\quad 0\le q<n\,.
\]
This yields the regions shaded in gray in \cite[Figs.~1--3]{Buffa2007}. We also require that the basis forms have zero trace on the boundary of their support, $\mathrm{t}\boldsymbol{\mu}^q_v=0$ on $\partial U^q_v$.
\end{assumption}
\noindent Assumption \ref{ass_support} is implemented by selecting $R_{vw}^q=0$ for $w\not\subset U^q_v$, and for $w\subset\partial U^q_v$.
\begin{assumption}\label{ass_sameness}
All cells $w\in\widetilde{\mathcal{T}}^q$ that are contained in the dual cell $v\in\mathcal{V}^q$ shall contribute with the same weight to the basis form $\boldsymbol{\mu}^q_v$.
\end{assumption}
\noindent The number of cells contained in the dual cell $v$ is given by
\begin{equation}\label{incidentcells}
n_v=\sum_{w\in\widetilde{\mathcal{T}}^q}|c^q_{vw}|\,,
\end{equation}
and we let
\begin{equation}\label{samecoef}
R_{vw}^q=\frac{1}{n_v}c_{vw}^q\mbox{ for }c_{vw}^q\ne 0\,,v\in\mathcal{V}^q,w\in\widetilde{\mathcal{T}}^q\,,
\end{equation}
with the coefficients $c_{vw}^q$ from \eqref{chain}.
\begin{assumption}\label{ass_stability}
Consider the dual cell $v\in\mathcal{V}^q$, and primal cell $t\in\mathcal{T}^r$, such that $v=\star t$, in the special case $r=q=1$. Cells $w\in\widetilde{\mathcal{T}}^q$ that are contained in the primal cell $t$ do not contribute to the basis form $\boldsymbol{\mu}^q_v$, i.e., $R_{vw}^q=0$.
\end{assumption}
\noindent 
This choice is essential for a stable duality.\par
The next proposition is an immediate consequence of Assumptions \ref{ass_support} and \ref{ass_sameness}.
\begin{proposition}\label{prop_intpol}
(Interpolation Property)
It holds that
\[
\int_u\boldsymbol{\mu}^q_v=\delta_{uv},\quad u,v\in\mathcal{V}^q,\boldsymbol{\mu}^q_v\in\mathcal{B}^q\,.
\]
The degrees of freedom are given by the de Rham maps, that is, integrals over dual nodes, edges and facets, respectively.
\end{proposition}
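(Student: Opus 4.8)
The plan is to turn the integral into a bilinear pairing of chain coefficients and then read off the value directly from Assumptions \ref{ass_support} and \ref{ass_sameness}. First I would expand the left-hand side by linearity of the de Rham map, inserting the chain representation \eqref{chain} of the dual cell $u$ and the defining combination \eqref{lincomb} of $\boldsymbol{\mu}^q_v$, so that $\int_u\boldsymbol{\mu}^q_v=\sum_{w,w'}c^q_{uw'}R^q_{vw}\int_{w'}\tilde{\boldsymbol{\lambda}}^q_w$. The inner integrals are governed by the defining duality of the Whitney complex on the barycentric refinement, $\int_{w'}\tilde{\boldsymbol{\lambda}}^q_w=\delta_{ww'}$ (the de Rham map is a left inverse of the Whitney map). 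This collapses the expression to $\int_u\boldsymbol{\mu}^q_v=\sum_w c^q_{uw}R^q_{vw}$, i.e.\ the pairing of the chain coefficients of $u$ against the B-C coefficients of $\boldsymbol{\mu}^q_v$.

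For the diagonal case $u=v$ I would substitute $R^q_{vw}=c^q_{vw}/n_v$ from \eqref{samecoef} and use that $c^q_{vw}\in\{-1,0,1\}$, so $(c^q_{vw})^2=|c^q_{vw}|$. The sum becomes $n_v^{-1}\sum_w|c^q_{vw}|=n_v^{-1}n_v=1$ by the definition \eqref{incidentcells} of $n_v$; the orientations drop out because only squares appear, so no consistency-of-signs bookkeeping is needed.

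The off-diagonal case $u\neq v$ is where the real work lies. Here I would show that every refinement cell $w$ with $c^q_{uw}\neq 0$ — every constituent of the foreign dual cell $u$ — satisfies $R^q_{vw}=0$. The key combinatorial fact is that a refinement $q$-cell is a constituent of \emph{at most one} dual $q$-cell, so $c^q_{uw}\neq 0$ together with $u\neq v$ forces $c^q_{vw}=0$. By itself this only says that the equal-weight rule \eqref{samecoef} assigns no weight to $w$; it does not yet give $R^q_{vw}=0$. The hard part will be to show that such a $w$, if it meets the support $U^q_v$ at all, is confined to its boundary $\partial U^q_v$, whence Assumption \ref{ass_support} sets $R^q_{vw}=0$. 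I would establish this by unwinding the recursive definition of $U^q_v$ into a union of closed top-dimensional dual cells and verifying, dimension by dimension, that the constituents of neighbouring dual $q$-cells which happen to lie in this union belong to its relative boundary.

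Once this is in place the off-diagonal pairing vanishes term by term, and combining it with the diagonal computation gives $\int_u\boldsymbol{\mu}^q_v=\delta_{uv}$. The closing sentence of the proposition, identifying the degrees of freedom with the de Rham maps over dual nodes, edges and facets, is then merely a restatement of this biorthogonality.
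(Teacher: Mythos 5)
Your proposal is correct and takes essentially the same route as the paper's proof: the diagonal case $u=v$ is the identical equal-weight computation combining \eqref{chain}, \eqref{lincomb} and \eqref{samecoef}, and the off-diagonal case rests on the same support-plus-zero-trace geometry of Assumption \ref{ass_support}. The only difference is presentational: the paper disposes of $u\ne v$ in one line, asserting that Assumption \ref{ass_support} already gives $u\not\subset U^q_v$ or $u\subset\partial U^q_v$ so that the integral vanishes, whereas you recast the same fact at the level of the coefficients $R^q_{vw}$ attached to the constituents of $u$ and flag its geometric verification as the remaining work.
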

\begin{proof}
Consider $u,v\in\mathcal{V}^q$. For $u\ne v$ it follows from Assumption \ref{ass_support} that either $u\not\subset U^q_v$ or $u\subset\partial U^q_v$, hence
\[
\int_u\boldsymbol{\mu}^q_v=0\,.
\]
For $u=v$ we receive
\[
\int_u\boldsymbol{\mu}^q_v=\sum_{w\in\widetilde{T}^q}c_{vw}^qR_{vw}^q=1\,,
\]
where we used \eqref{chain}, \eqref{lincomb} and \eqref{samecoef}.
\end{proof}
The remaining undetermined coefficients $R_{v\genindex}^q$ related to the interior of the support $U_v^q$ are chosen in such a way that the following proposition holds. We first state the proposition and then derive the construction principle.
\begin{proposition}\label{prop_extder}
(Discrete Exterior Derivative) The matrix of the exterior derivative in the bases $\boldsymbol{\mu}^q$ is the transpose of the matrix of the exterior derivative in the standard bases $\boldsymbol{\lambda}^r$, up to sign,\footnote{From the geometric viewpoint, the dual grid should be outer oriented, and the B-C complex should be twisted. However, to keep things simple, we consider the dual grid inner oriented, and the B-C complex ordinary. This yields a sign correction, depending on $r$ and $n$, whenever the incidence matrix is applied on the dual side. The factor $(-1)^r$ holds for $n=2$, and for the orientation convention taken for the dual mesh.}
\[
\mathrm{d}\boldsymbol{\lambda}^r_s=\sum_{t\in\mathcal{T}^{r+1}}D_{st}^r\boldsymbol{\lambda}^{r+1}_t\mbox{\quad implies\quad}\mathrm{d}\boldsymbol{\mu}^{q-1}_{\star t}=(-1)^r\sum_{s\in\mathcal{T}^r}D_{st}^r\boldsymbol{\mu}^q_{\star s}\,.
\]
\end{proposition}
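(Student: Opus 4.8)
The plan is to exploit that $(\mathcal{B}^q,\mathrm{d})$ is by construction a subcomplex of $(\widetilde{\mathcal{W}}^q,\mathrm{d})$, so that $\mathrm{d}\boldsymbol{\mu}^{q-1}_{\star t}$ already lies in $\mathcal{B}^q=\mathrm{span}\,\boldsymbol{\mu}^q$ and may be expanded as $\mathrm{d}\boldsymbol{\mu}^{q-1}_{\star t}=\sum_{v\in\mathcal{V}^q}a_v\boldsymbol{\mu}^q_v$. The coefficients $a_v$ are then read off by the degrees of freedom supplied by Proposition~\ref{prop_intpol}: integrating over a dual $q$-cell $u\in\mathcal{V}^q$ and using $\int_u\boldsymbol{\mu}^q_v=\delta_{uv}$ gives $a_u=\int_u\mathrm{d}\boldsymbol{\mu}^{q-1}_{\star t}$. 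Writing $u=\star s$ with $s\in\mathcal{T}^r$, the whole statement thus reduces to evaluating the single number $\int_{\star s}\mathrm{d}\boldsymbol{\mu}^{q-1}_{\star t}$ for each pair $s,t$ and showing it equals $(-1)^r D^r_{st}$.

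First I would apply Stokes' theorem on the dual cell, obtaining $\int_{\star s}\mathrm{d}\boldsymbol{\mu}^{q-1}_{\star t}=\int_{\partial(\star s)}\boldsymbol{\mu}^{q-1}_{\star t}$. The boundary $\partial(\star s)$ is a $(q-1)$-chain of dual cells; expanding it as $\partial(\star s)=\sum_{t'\in\mathcal{T}^{r+1}}E_{st'}\,\star t'$ with integer incidences $E_{st'}$, and invoking Proposition~\ref{prop_intpol} one dimension lower ($\int_{\star t'}\boldsymbol{\mu}^{q-1}_{\star t}=\delta_{t't}$), collapses the boundary integral to $a_{\star s}=E_{st}$. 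Hence $\mathrm{d}\boldsymbol{\mu}^{q-1}_{\star t}=\sum_{s\in\mathcal{T}^r}E_{st}\,\boldsymbol{\mu}^q_{\star s}$, and the proposition is equivalent to the purely combinatorial identity $E_{st}=(-1)^r D^r_{st}$ between the dual boundary incidence and the primal one.

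The core of the argument, and the step I expect to be the main obstacle, is establishing this incidence identity, which is the chain-level Poincar\'e duality of the barycentric construction together with the correct sign. I would first recall, via Stokes applied to the primal Whitney forms and the interpolation property of $\boldsymbol{\lambda}^r$, that $D^r_{st}$ is exactly the incidence number of the $r$-cell $s$ in $\partial t$ for $t\in\mathcal{T}^{r+1}$, i.e.\ $\partial t=\sum_s D^r_{st}\,s$. On the dual side, the defining feature of the barycentric dual is that the facets of $\star s$ are precisely the cells $\star t'$ with $s$ a codimension-one face of $t'$, so the supports match and $|E_{st'}|=|D^r_{st'}|$. It remains to pin down the global sign, which is forced by the orientation convention that $(\star t,t)$ carries the orientation of the interface (see the definition of $\star$ in \eqref{defstar} and the footnote to the statement): comparing the induced boundary orientation of $\partial(\star s)$ on the facet $\star t$ with the manifold orientation yields exactly the factor $(-1)^r$ relative to $D^r_{st}$ for $n=2$. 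Careful bookkeeping of these orientations on the barycentric refinement is where the real work lies; everything else is formal.

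Finally I would note the dual logical role of this computation. Since the interpolation property (Proposition~\ref{prop_intpol}) holds already after Assumptions~\ref{ass_support} and~\ref{ass_sameness}, the identity $a_{\star s}=\int_{\star s}\mathrm{d}\boldsymbol{\mu}^{q-1}_{\star t}=(-1)^r D^r_{st}$ prescribes the image $\mathrm{d}\boldsymbol{\mu}^{q-1}_{\star t}\in\mathcal{B}^q$ uniquely; demanding that $\mathrm{d}\boldsymbol{\mu}^{q-1}_{\star t}$ equal this element is precisely what fixes the remaining interior coefficients $R^q_{v\genindex}$, so the proof simultaneously verifies the formula and furnishes the construction principle announced before the statement.
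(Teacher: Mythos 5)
Your proposal has a genuine circularity at its foundation. The very first step --- that ``$(\mathcal{B}^q,\mathrm{d})$ is by construction a subcomplex of $(\widetilde{\mathcal{W}}^q,\mathrm{d})$, so $\mathrm{d}\boldsymbol{\mu}^{q-1}_{\star t}$ already lies in $\mathcal{B}^q$'' --- assumes exactly what is at stake. At the point where Prop.~\ref{prop_extder} is to be proved, the spaces $\mathcal{B}^{q-1}$ are not yet fully defined: Assumptions \ref{ass_support}--\ref{ass_stability} fix only some of the coefficients in \eqref{lincomb}, and the remaining interior coefficients are \emph{defined} by imposing the asserted identity, via \eqref{fixcoeffsa}--\eqref{fixcoeffsb}. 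In the paper's logical order, the complex property (Prop.~\ref{prop_complex}) is a \emph{consequence} of Prop.~\ref{prop_extder}, not an input to it; for a generic admissible choice of the interior coefficients, $\mathrm{d}\boldsymbol{\mu}^{q-1}_{\star t}$ would \emph{not} lie in $\mathrm{span}\,\boldsymbol{\mu}^q$. What your argument actually establishes is a conditional uniqueness statement: \emph{if} $\mathrm{d}\boldsymbol{\mu}^{q-1}_{\star t}\in\mathcal{B}^q$, then its coefficients are forced to equal $(-1)^rD^r_{st}$, via Prop.~\ref{prop_intpol}, Stokes' theorem on dual cells, and the incidence relation $\partial\star s=(-1)^r\sum_{t}D^r_{st}\star t$. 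That identification is correct, and it is a cleaner, more geometric route than the paper's expansion in the fine-mesh Whitney basis --- but it proves the wrong half of the proposition.

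The missing half is existence/consistency, which is where the real mathematical content lies. Your closing paragraph says that ``demanding'' the identity fixes the remaining coefficients, but a demand is not a proof that it can be met: the system \eqref{fixcoeffsb} is overdetermined. For the B-C $1$-forms the number of equations exceeds the number of unknowns by one (Sect.~\ref{B-C_1-Forms}), and for the B-C $0$-forms equation \eqref{fixcoeffsa} must hold on \emph{all} edges of the barycentric refinement, not only those in the central facet (Sect.~\ref{BC_0-Forms}); the paper verifies this by showing that the prescribed right-hand side $(-1)^r\sum_{s}D^r_{st}\boldsymbol{\mu}^q_{\star s}$ is closed and then invoking a tree-cotree argument. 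Without an argument of this kind, your proof does not show that forms satisfying Assumptions \ref{ass_support}--\ref{ass_stability} together with the asserted derivative formula exist at all; with it, your Stokes/duality computation would be a nice complement to the paper's construction, but it cannot replace it.
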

\begin{proof}
By construction. The above equation can be recognized as a relation in $\widetilde{\mathcal{W}}^q$, and therefore be re-written as
\begin{subequations}
\begin{equation}\label{fixcoeffsa}
\int_z\mathrm{d}\boldsymbol{\mu}^{q-1}_{\star t}=(-1)^r\int_z\sum_{s\in\mathcal{T}^r}D_{st}^r\boldsymbol{\mu}^q_{\star s}\,,\quad t\in\mathcal{T}^{r+1},\forall z\in\widetilde{\mathcal{T}}^q: z\subset U^{q-1}_{\star t}.
\end{equation}
Invoking the basis expansion \eqref{lincomb} and using Prop.~\ref{prop_intpol} yields the equivalent expression
\begin{equation}\label{fixcoeffsb}
\sum_{\substack{w\in\widetilde{\mathcal{T}}^{q-1}:\\ w\subset\partial z}}\!\!\!\!\!\!R_{\star tw}^{q-1}\widetilde{D}_{wz}^{q-1}
=(-1)^r\sum_{\substack{s\in\mathcal{T}^r:\\ s\subset\partial t}}\!\!\!D_{st}^rR_{\star sz}^q\,,\quad t\in\mathcal{T}^{r+1},\forall z\in\widetilde{\mathcal{T}}^q: z\subset U^{q-1}_{\star t}\,.
\end{equation}
\end{subequations}
The equation \eqref{fixcoeffsb} allows to recursively determine the coefficients $R_{\genindex\genindex}^{q-1}$ from $R_{\genindex\genindex}^q$.
\end{proof}
\subsubsection{B-C 2-forms}\label{BC_2-Forms}
The space $\mathcal{B}^2$ is uniquely determined by Assumptions \ref{ass_support} -- \ref{ass_sameness}. The coefficients $R_{\genindex\genindex}^2$ are depicted in \cite[Fig.~3]{Buffa2007}.
\subsubsection{B-C 1-forms}($r=0,q=2$)\label{B-C_1-Forms}
%
%
A typical configuration 
follows that of \cite[Fig.~2]{Buffa2007}.
The edge coefficients $R_{\star t\genindex}^1\in\{0;\pm 1/2\}$ 
are determined by Assumptions \ref{ass_support} -- \ref{ass_stability}. The remaining coefficients can be obtained by evaluating \eqref{fixcoeffsb} successively for the 
facets. 
The sum on the right hand side of \eqref{fixcoeffsb} reduces to a single term, since each facet is contained in the support of exactly one B-C basis $2$-form. 
The system \eqref{fixcoeffsb} is consistent, despite that the number of equations exceeds the number of unknowns by one.\par
\begin{figure}
\centering
\includegraphics[width=0.3\textwidth]{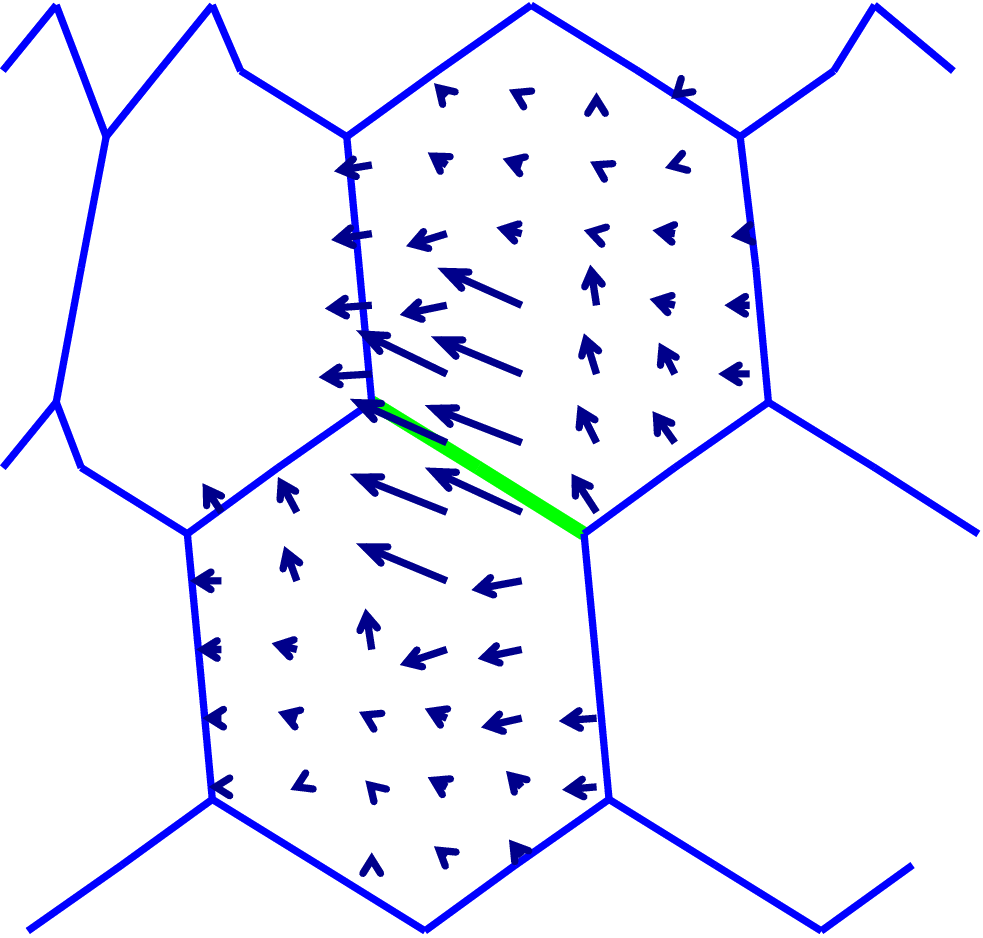}
\caption{\label{fig_Plot_BC_1_Form}Euclidean vector proxy of the B-C basis $1$-form associated with the center edge.}
\end{figure}
The Euclidean vector proxy of a B-C basis $1$-form is plotted in Fig.~\ref{fig_Plot_BC_1_Form}.
\subsubsection{B-C 0-forms}($r=1,q=1$)\label{BC_0-Forms}
%
%
Consider the configuration depicted in 
\cite[Fig.~1]{Buffa2007}. The B-C $0$-form is supported in 
the region shaded in gray. The coefficients $R_{\star t\genindex}^0\in\{0;1\}$ of the 
center node and the boundary nodes are determined by Assumptions \ref{ass_support} -- \ref{ass_sameness}. The remaining coefficients can be obtained by evaluating \eqref{fixcoeffsb} for the edges 
contained in the primal facet, 
the center triangle. 
This case is slightly more complicated than before, because edges that connect a vertex to the barycenter of the triangle are contained in the support of two B-C basis $1$-forms. They give rise to two terms in the right hand side of \eqref{fixcoeffsb}. 
\par
Equation \eqref{fixcoeffsa} must hold for all edges of the barycentric refinement, not just for those contained in the central triangle. To verify this we first show that the $q$-form on the right hand side is closed. By making use of Prop.~\ref{prop_extder} recursively we find
\begin{align*}
\mathrm{d}\sum_{s\in\mathcal{T}^r}D_{st}^r\boldsymbol{\mu}^q_{\star s}
&=\sum_{s\in\mathcal{T}^r}D_{st}^r\mathrm{d}\boldsymbol{\mu}^q_{\star s}\\
&=(-1)^{r-1}\sum_{s\in\mathcal{T}^r}\sum_{s^\prime\in\mathcal{T}^{r-1}}D_{s^\prime s}^{r-1}D_{st}^r\boldsymbol{\mu}^{q+1}_{\star s^\prime}=0\,.
\xqedhere{54pt}{\qed}
\end{align*}
\par
We therefore invoke a tree-cotree decomposition of the edge graph \cite[Def.~5.2]{bossavit}. 
Equation \eqref{fixcoeffsa} calls for verification for the spanning tree edges outside the central triangle. It is easy to see that this is consistent with the prescribed zero coefficients related to the nodes in the boundary of the support. 
\begin{remark}\label{rem_nometric}(Metric-free Property)
Observe that no metric information entered the construction of the B-C complex. Therefore, the coefficients $R_{\genindex\genindex}^{\genindex}$ depend only on the topology but not on the geometry of the mesh.
\end{remark}
\begin{remark}(Dimensions other than two)
If the above construction principle is applied to the one-dimensional interface ($n=1$) between two-dimensional domains, then, as a result, the B-C complex on the barycentric dual mesh turns out to be the Whitney complex.\par
On the other hand, in three dimensions ($n=3$), the construction principle does not uniquely fix the B-C complex.

\end{remark}
\begin{remark}(Reconstruction of the Whitney complex)
If the roles of primal and dual grids are interchanged, and the above construction principle is applied, then the Whitney complex on $\mathcal{T}$ is reconstructed as a subcomplex of the Whitney complex on the barycentric refinement $\widetilde{\mathcal{T}}$. This approach yields the coefficients reported in \cite[eq.~(42)]{Andriulli2008}.
\end{remark}
\subsection{Main properties of the B-C complex}\label{sec_BC_properties}
The definitions in Sect.~\ref{sec_BC_notation} and Assumptions \ref{ass_support} -- \ref{ass_stability} can be used to infer further characteristics of the B-C complex besides the Interpolation and Discrete Exterior Derivative properties in Props.~\ref{prop_intpol} and \ref{prop_extder}, and the Metric-free Property of Remark~\ref{rem_nometric}.
\begin{proposition}\label{prop_condis}
(Conforming Discretization of Trace Spaces). It holds that
\[
\mathcal{B}^q\subset H^{-1/2}_\perp\mathsf{\Lambda}^q(\mathrm{d},\Gamma_h)\,.
\]
\end{proposition}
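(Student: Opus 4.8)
The plan is to deduce the claim from a single structural observation and thereby avoid any genuinely new regularity analysis. By construction the B-C space $\mathcal{B}^q$ sits inside the Whitney space $\widetilde{\mathcal{W}}^q$ of the barycentric refinement, and the latter is already a conforming discretization of the trace space. It then suffices to transport conformity from $\widetilde{\mathcal{W}}^q$ to $\mathcal{B}^q$ along this inclusion.

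First I would record that each basis form is, by the defining expansion \eqref{lincomb}, a finite real linear combination $\boldsymbol{\mu}^q_v=\sum_w R^q_{vw}\tilde{\boldsymbol{\lambda}}^q_w$ of standard Whitney $q$-forms on $\widetilde{\mathcal{T}}$. Hence $\mathcal{B}^q=\mathrm{span}\,\boldsymbol{\mu}^q$ is a linear subspace of $\widetilde{\mathcal{W}}^q$. It then remains to invoke the classical fact that Whitney forms on a Lipschitz simplicial surface are (tangentially) conforming, i.e.\ $\widetilde{\mathcal{W}}^q\subset H^{-1/2}_\perp\mathsf{\Lambda}^q(\mathrm{d},\Gamma_h)$; see \cite{Buffa2007,bossavit}. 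Concretely, a Whitney form is piecewise polynomial and its tangential trace matches across every cell interface of $\widetilde{\mathcal{T}}$, so the cellwise exterior derivative coincides with the distributional one, with no spurious singular contribution supported on the skeleton, and both the form and its $\mathrm{d}$ lie in $H^{-1/2}_\perp\mathsf{\Lambda}^\bullet(\Gamma_h)$. To handle the graph-norm part of the target space one additionally needs $\mathrm{d}\boldsymbol{\mu}^q_v$ in the appropriate degree; this is immediate, since by the Discrete Exterior Derivative property (Prop.~\ref{prop_extder}) $\mathrm{d}\boldsymbol{\mu}^q_v$ is again an element of $\widetilde{\mathcal{W}}^{q+1}$, conforming by the same argument (and the condition is vacuous in the top degree $q=2$).

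The inclusion then closes the proof by transitivity: $\mathcal{B}^q\subset\widetilde{\mathcal{W}}^q\subset H^{-1/2}_\perp\mathsf{\Lambda}^q(\mathrm{d},\Gamma_h)$. I expect the only delicate point to be one of bookkeeping rather than of analysis, namely making sure the target is read as the trace of the three-dimensional energy space onto the curvilinear Lipschitz polyhedron $\Gamma_h$, and that the standard Whitney-conformity result is applied to the barycentric refinement $\widetilde{\mathcal{T}}$ (itself a bona fide simplicial complex) rather than to $\mathcal{T}$. The zero-trace-on-support-boundary requirement of Assumption~\ref{ass_support} guarantees that the locally supported B-C forms glue into globally conforming forms; viewed through \eqref{lincomb} this is automatic, since the vanishing coefficients $R^q_{vw}=0$ outside and on the boundary of the support already realise extension by zero within $\widetilde{\mathcal{W}}^q$.
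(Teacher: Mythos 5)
Your proposal is correct and follows exactly the paper's own argument: the inclusion $\mathcal{B}^q\subset\widetilde{\mathcal{W}}^q$ via the expansion \eqref{lincomb}, combined with conformity of the Whitney spaces on the barycentric refinement, which is precisely what the paper means by conformity being ``inherited from the spaces $\widetilde{\mathcal{W}}^q$ through \eqref{lincomb}.'' Your additional remarks (graph-norm part via $\mathrm{d}\boldsymbol{\mu}^q_v\in\widetilde{\mathcal{W}}^{q+1}$, extension by zero being automatic) are sound elaborations of details the paper leaves implicit.
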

\begin{proof}
This property is inherited from the spaces $\widetilde{\mathcal{W}}^q$ through \eqref{lincomb}.
\end{proof}
\noindent Note that the spaces $H^{-1/2}_\perp\mathsf{\Lambda}^q(\mathrm{d},\Gamma_h)$ contain traces of differential forms whose exterior derivatives are square-integrable. For a proper definition see \cite[p.~23]{Kurz2012}. They encompass and generalize the well-known spaces $H^{-1/2}(\Gamma_h)$, $\boldsymbol{H}^{-1/2}_\perp(\mathrm{curl}_\Gamma,\Gamma_h)$, and $L^2(\Gamma_h)$, respectively. Functions in $\mathcal{B}^0$ are continuous, differential $1$-forms in $\mathcal{B}^1$ have a continuous (tangential) trace across interfaces,\footnote{curl-conforming, in the language of vector analysis. Note that in \cite{Buffa2007} the space $\hodge\mathcal{B}^1$ is considered, where $\hodge$ is the Hodge operator on $\Gamma_h$ induced by the Euclidean metric. This corresponds to a rotation by $\pi/2$, hence div-conforming fields. Remark \ref{rem_pairing} provides a rationale for our approach.} while differential $2$-forms in $\mathcal{B}^2$ are discontinuous.
\begin{proposition}\label{prop_complex}
(Complex Property). The B-C spaces form a discrete de Rham complex,
\[
0\rightarrow\mathcal{B}^0\xrightarrow{\mathrm{d}}\mathcal{B}^1\xrightarrow{\mathrm{d}}\mathcal{B}^2\xrightarrow{\mathrm{d}} 0\,.
\]
The dimensions of its cohomology groups are given by the Betti numbers related to $\Gamma_h$. They are relevant in case of topologically non-trivial situations.
\end{proposition}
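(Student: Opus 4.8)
The plan is to establish Proposition~\ref{prop_complex} in two parts: first that $(\mathcal{B}^q,\mathrm{d})$ is genuinely a complex, and second that its cohomology recovers the Betti numbers of $\Gamma_h$. For the complex property, what must be shown is that $\mathrm{d}\circ\mathrm{d}=0$ on the B-C spaces, i.e.\ that $\mathrm{d}\mathcal{B}^0\subset\mathcal{B}^1$, $\mathrm{d}\mathcal{B}^1\subset\mathcal{B}^2$, and that the composite vanishes. The key observation is that everything has already been arranged by Proposition~\ref{prop_extder} (Discrete Exterior Derivative): the exterior derivative acts on the bases $\boldsymbol{\mu}^q$ through the transpose (up to the sign $(-1)^r$) of the primal incidence matrices $D^r$. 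Hence $\mathrm{d}\boldsymbol{\mu}^{q-1}_{\star t}$ is by definition a linear combination of the $\boldsymbol{\mu}^q_{\star s}$, so each $\mathrm{d}\mathcal{B}^{q-1}\subset\mathcal{B}^q$ with no further work. The nilpotency $\mathrm{d}\circ\mathrm{d}=0$ then follows from the analogous identity on the primal side: applying Proposition~\ref{prop_extder} twice produces the double sum $\sum_{s}\sum_{s'}D^{r-1}_{s's}D^{r}_{st}$, which vanishes because $D^{r-1}D^r=0$ is exactly the statement that $(\mathcal{W}^r,\mathrm{d})$ is a complex (equivalently $\partial\partial=0$ for the primal chain complex). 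This is precisely the computation already displayed in Sect.~\ref{BC_0-Forms} when verifying closedness of the right-hand side, so I would simply cite it.

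For the cohomology statement, the strategy is to exhibit an isomorphism of cochain complexes between $(\mathcal{B}^q,\mathrm{d})$ and a complex whose cohomology is manifestly topological. The cleanest route uses Proposition~\ref{prop_extder} directly: the matrices representing $\mathrm{d}$ in the B-C bases are the transposes of the primal incidence matrices $D^r$. Therefore the cochain complex $(\mathcal{B}^q,\mathrm{d})$, read off in coordinates with respect to the bases $\boldsymbol{\mu}^q$ indexed by $\mathcal{V}^q\cong\mathcal{T}^{2-q}$, is isomorphic to the complex $(\mathbb{R}^{\mathcal{T}^{r}},(D^{r})^{\!\top})$ with $r=2-q$, i.e.\ to the \emph{simplicial cochain complex} of $\mathcal{T}$ (up to the innocuous signs $(-1)^r$, which only rescale basis vectors and do not alter ranks of differentials). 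The cohomology of that complex is the simplicial cohomology $H^r(\mathcal{T};\mathbb{R})\cong H^r(\Gamma_h;\mathbb{R})$, whose dimensions are the Betti numbers $b_r(\Gamma_h)$. Since the construction reindexes degree $r$ to degree $q=2-q$... more precisely degree $q$ of the B-C complex corresponds to degree $r=2-q$ of $\mathcal{T}$, Poincar\'e duality on the closed surface $\Gamma_h$ identifies $b_{2-q}=b_q$, so the B-C cohomology in degree $q$ has dimension $b_q(\Gamma_h)$, as claimed.

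The main obstacle is making the identification of $(\mathcal{B}^q,\mathrm{d})$ with the simplicial cochain complex of $\mathcal{T}$ fully rigorous rather than merely formal. The subtlety is that the $\boldsymbol{\mu}^q$ are bases of $\mathcal{B}^q$ (this uses the Interpolation Property, Prop.~\ref{prop_intpol}, which guarantees linear independence via $\int_u\boldsymbol{\mu}^q_v=\delta_{uv}$), so the coordinate complex is honestly isomorphic to the abstract one; one must confirm that the sign factors $(-1)^r$ assemble into a chain isomorphism, which they do because a diagonal sign change of bases commutes with taking cohomology. A secondary point is the invocation of Poincar\'e duality to pass from $b_{2-q}$ to $b_q$: this is legitimate precisely because $\Gamma_h$ is a closed (boundaryless) orientable surface, the standing assumption $\partial\Gamma_h=0$ from the present section; I would flag that the manifold-with-boundary case of Sect.~\ref{sec_open_interface} requires the corresponding Lefschetz duality instead. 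Given these remarks, the proof reduces to quoting the simplicial-to-singular comparison theorem and is short.
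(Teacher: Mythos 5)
Your proposal is correct and follows essentially the same route as the paper, whose entire proof is the single line ``Consequence of Prop.~\ref{prop_extder}'': you simply make explicit what that citation encodes, namely that $\mathrm{d}\mathcal{B}^{q-1}\subset\mathcal{B}^q$ and $\mathrm{d}\circ\mathrm{d}=0$ follow from the transposed-incidence-matrix formula together with $D^{r-1}D^r=0$, and that the resulting coordinate complex is (up to signs and the reindexing $q=2-r$) the simplicial chain complex of $\mathcal{T}$, so its cohomology dimensions are the Betti numbers of $\Gamma_h$. The only quibble is terminological --- the transposed complex is the simplicial \emph{chain} complex rather than the cochain complex, though over $\mathbb{R}$ the dimensions agree --- and your use of Prop.~\ref{prop_intpol} for linear independence of the $\boldsymbol{\mu}^q$ and of Poincar\'e duality for the degree-matching $b_{2-q}=b_q$ are exactly the right supplementary ingredients.
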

\begin{proof}
Consequence of Prop.~\ref{prop_extder}.
\end{proof}
\begin{proposition}\label{prop_comproj}
The projectors $\Pi$ from sufficiently regular subspaces of the spaces $H^{-1/2}_\perp\mathsf{\Lambda}^q(\mathrm{d},\Gamma_h)$ to $\mathcal{B}^q$ commute with the exterior derivative,
\[
\Pi\circ\mathrm{d}=\mathrm{d}\circ\Pi.
\]
\end{proposition}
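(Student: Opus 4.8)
The plan is to identify $\Pi$ with the canonical interpolation projector attached to the degrees of freedom of Prop.~\ref{prop_intpol}: on a sufficiently regular $p$-form $\eta$ it acts by
\[
\Pi\eta=\sum_{v\in\mathcal{V}^p}\Bigl(\int_v\eta\Bigr)\boldsymbol{\mu}^p_v ,\qquad 0\le p\le 2 .
\]
The dual de Rham maps $\eta\mapsto\int_v\eta$ are well defined on such $\eta$, and the Interpolation Property $\int_u\boldsymbol{\mu}^p_v=\delta_{uv}$ shows both that $\Pi$ restricts to the identity on $\mathcal{B}^p$, so that it is genuinely a projector with range $\mathcal{B}^p$, and that every form in $\mathcal{B}^p$ is uniquely determined by its data $\bigl(\int_v\,\genindex\,\bigr)_{v\in\mathcal{V}^p}$. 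Now fix a $(q-1)$-form $\omega$. Since $\Pi(\mathrm{d}\omega)\in\mathcal{B}^q$ by construction and $\mathrm{d}(\Pi\omega)\in\mathcal{B}^q$ by the Complex Property (Prop.~\ref{prop_complex}), it suffices to prove that $\int_v\Pi(\mathrm{d}\omega)=\int_v\mathrm{d}(\Pi\omega)$ for every $v\in\mathcal{V}^q$.

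For the first term the Interpolation Property together with Stokes' theorem gives $\int_v\Pi(\mathrm{d}\omega)=\int_v\mathrm{d}\omega=\int_{\partial v}\omega$. For the second term I would expand $\Pi\omega=\sum_{t\in\mathcal{T}^{r+1}}\bigl(\int_{\star t}\omega\bigr)\boldsymbol{\mu}^{q-1}_{\star t}$ over the basis of $\mathcal{B}^{q-1}$, differentiate termwise, and insert the Discrete Exterior Derivative property (Prop.~\ref{prop_extder}). Testing against the cell $v=\star s$ with $s\in\mathcal{T}^r$ and using the Interpolation Property to collapse $\int_{\star s}\mathrm{d}\boldsymbol{\mu}^{q-1}_{\star t}=(-1)^rD^r_{st}$, I obtain $\int_v\mathrm{d}(\Pi\omega)=(-1)^r\sum_{t\in\mathcal{T}^{r+1}}D^r_{st}\int_{\star t}\omega$.

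Comparing the two expressions is the combinatorial heart of the argument, and the step I expect to require the most care. What must be shown is that the boundary chain of the dual cell satisfies $\partial(\star s)=(-1)^r\sum_{t\in\mathcal{T}^{r+1}}D^r_{st}\,\star t$, i.e.\ that the boundary operator on $\mathcal{V}$ is, up to the sign $(-1)^r$ recorded in the footnote of Prop.~\ref{prop_extder}, the transpose of the primal incidence operator $D^r$; granting this, $\int_{\partial v}\omega$ reproduces exactly the expression just found for $\int_v\mathrm{d}(\Pi\omega)$, and the two degrees of freedom agree. The sign bookkeeping is dictated entirely by the orientation convention fixed with \eqref{defstar}, so no new geometric input is needed. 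Finally I would note that the hypothesis of sufficient regularity enters only to guarantee that the dual de Rham maps and Stokes' theorem apply to $\omega$ and to $\mathrm{d}\omega$; conformity of the range is already furnished by Prop.~\ref{prop_condis}.
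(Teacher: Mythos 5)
Your proof is correct and follows essentially the same route as the paper's: the paper likewise takes $\Pi$ to be the interpolation operator built from the dual de Rham maps and combines Stokes' theorem on dual cells, the dual incidence relation $\partial\star s=(-1)^r\sum_{t\in\mathcal{T}^{r+1}} D^r_{st}\star t$ (which the paper also asserts from the orientation convention rather than proves), and Prop.~\ref{prop_extder}. The only difference is presentational: the paper turns $\Pi\,\mathrm{d}\,\boldsymbol{\beta}$ into $\mathrm{d}\,\Pi\,\boldsymbol{\beta}$ through a single chain of identities, whereas you verify equality of the two members of $\mathcal{B}^q$ by testing their degrees of freedom, which is equivalent by the unisolvence furnished by Prop.~\ref{prop_intpol}.
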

\begin{proof}
Recall, the exterior derivative is represented by an incidence matrix $D_{st}^r$, in a given simplicial complex. Therefore it holds $\partial\star s=(-1)^r\sum_{t\in\mathcal{T}^{r+1}}D_{st}^r\star t$. Let $\boldsymbol{\beta}\in H^{-1/2}_\perp\mathsf{\Lambda}^{q-1}(\mathrm{d},\Gamma_h)$ such that the de Rham maps of $\boldsymbol{\beta}$ and $\mathrm{d}\boldsymbol{\beta}$ exist. Then
\begin{align*}
\Pi\,\mathrm{d}\,\boldsymbol{\beta}&=\sum_{s\in\mathcal{T}^r}\boldsymbol{\mu}^q_{\star s}\int_{\star s}\mathrm{d}\boldsymbol{\beta}
=\sum_{s\in\mathcal{T}^r}\boldsymbol{\mu}^q_{\star s}\int_{\partial\star s}\boldsymbol{\beta}\\
&=\sum_{t\in\mathcal{T}^{r+1}}\left((-1)^r\sum_{s\in\mathcal{T}^r}D_{st}^r\boldsymbol{\mu}^q_{\star s}\right)\int_{\star t}\boldsymbol{\beta}
=\sum_{t\in\mathcal{T}^{r+1}}\mathrm{d}\boldsymbol{\mu}^{q-1}_{\star t}\int_{\star t}\boldsymbol{\beta}\\
&=\mathrm{d}\sum_{t\in\mathcal{T}^{r+1}}\boldsymbol{\mu}^{q-1}_{\star t}\int_{\star t}\boldsymbol{\beta}=\mathrm{d}\,\Pi\,\boldsymbol{\beta}
\end{align*}
holds, where we used Stokes' Theorem and Prop.~\ref{prop_extder}.
\end{proof}
\begin{proposition}\label{prop_unity}
(Partition of Unity) The functions $\boldsymbol{\mu}^0_v\in\mathcal{B}^0$, $v\in\mathcal{V}^0$, form a partition of unity.\footnote{We assume that all facets in $\mathcal{T}^2$ are in the orientation of the interface. Then our convention induces the orientation $+1$ for all dual nodes. Otherwise, the orientation of the dual nodes has to be taken into account when forming the partition of unity.}
\end{proposition}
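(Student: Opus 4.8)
The plan is to show directly that the single function $f:=\sum_{v\in\mathcal{V}^0}\boldsymbol{\mu}^0_v$ equals the constant function $1$ on $\Gamma_h$. By Prop.~\ref{prop_condis} every element of $\mathcal{B}^0$ is continuous and piecewise linear over the barycentric refinement, so $f$ is a continuous $0$-form. I would reduce the claim to two observations: (i) $f$ is closed, $\mathrm{d}f=0$, hence constant on each connected component of $\Gamma_h$; and (ii) $f$ takes the value $1$ at the barycenter of every primal facet. Since each connected component of $\Gamma_h$ contains at least one facet, (i) and (ii) together force $f\equiv 1$.

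For (i) I would differentiate termwise and invoke the Discrete Exterior Derivative property (Prop.~\ref{prop_extder}) in the case $r=q=1$, which gives $\mathrm{d}\boldsymbol{\mu}^0_{\star t}=-\sum_{s\in\mathcal{T}^1}D_{st}^1\boldsymbol{\mu}^1_{\star s}$ for each facet $t\in\mathcal{T}^2$. Summing over $t$ and interchanging the order of summation yields $\mathrm{d}f=-\sum_{s\in\mathcal{T}^1}\bigl(\sum_{t\in\mathcal{T}^2}D_{st}^1\bigr)\boldsymbol{\mu}^1_{\star s}$. The inner sum $\sum_t D_{st}^1$ is the incidence of the edge $s$ in the chain $\sum_t t$ of all facets; because $\partial\Gamma_h=0$, this chain is a cycle, so every edge is shared by exactly two facets inducing opposite orientations, and the inner sum vanishes for each $s$. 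Hence $\mathrm{d}f=0$. This cancellation is precisely where the standing assumption of a closed interface is used, and getting the orientation bookkeeping right (including the sign convention flagged in the footnote to Prop.~\ref{prop_extder}) is the main technical point.

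For (ii) I would use the Interpolation Property (Prop.~\ref{prop_intpol}): the degree of freedom attached to a dual node is point evaluation there, so evaluating $f$ at the dual node $u$ gives $\sum_{v}\int_u\boldsymbol{\mu}^0_v=\sum_v\delta_{uv}=1$, provided the dual nodes carry orientation $+1$. The footnote's hypothesis that all facets of $\mathcal{T}^2$ are oriented consistently with the interface guarantees exactly this; without it one obtains a partition of unity only up to the signs induced on the dual nodes. Combining (i) and (ii) completes the proof. An alternative, more computational route would be to expand $f=\sum_w\bigl(\sum_v R^0_{vw}\bigr)\tilde{\boldsymbol{\lambda}}^0_w$ via \eqref{lincomb} and to show that the coefficient sum $\sum_v R^0_{vw}$ equals $1$ for every barycentric node $w$, using that the Whitney functions $\tilde{\boldsymbol{\lambda}}^0_w$ themselves sum to $1$; but this requires a case distinction over the three types of barycentric nodes, whereas the closedness argument handles all nodes at once.
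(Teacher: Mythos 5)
Your proof is correct and follows essentially the same route as the paper's: closedness of $\sum_v\boldsymbol{\mu}^0_v$ via Prop.~\ref{prop_extder} and the cancellation of incidence numbers on the closed interface, then fixing the constant to one via Prop.~\ref{prop_intpol}. Your write-up is merely more explicit about the orientation bookkeeping and the evaluation at dual nodes, but the argument is the paper's.
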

\begin{proof}
Consider that
\[
\mathrm{d}\sum_{t\in\mathcal{T}^2}\boldsymbol{\mu}_{\star t}^0=-\sum_{s\in\mathcal{T}^1}\left(\sum_{t\in\mathcal{T}^2}D_{st}^1\right)\boldsymbol{\mu}_{\star s}^1=0
\]
holds, where we used Prop.~\ref{prop_extder}. The expression in parenthesis is zero, since each edge $s$ is incident with two facets $t$, with different signs, provided that boundary edges have been removed from $\mathcal{T}^1$ by appropriate boundary conditions \cite[Sect.~4.2]{Buffa2007}. We infer that the sum on the left side must be constant in each connected component of the interface. From Prop.~\ref{prop_intpol} it follows that the constant is one, which completes the proof. This demonstrates that the Partition of Unity property is a corollary to Props.~\ref{prop_intpol} and \ref{prop_extder}.
\end{proof}
\begin{proposition}\label{prop_voronoi}
(Euclidean Volume Form) Denote by $\hodge$ the Hodge operator on $\Gamma_h$ induced by the Euclidean metric. The B-C $2$-form $\boldsymbol{\mu}^2_v\in\mathcal{B}^2$ is supported in the Voronoi cell $v\in\mathcal{V}^2$. It is a fixed multiple of the volume form $\hodge 1$ in $v$.
\end{proposition}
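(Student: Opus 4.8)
The plan is to treat the two assertions separately: first that $\boldsymbol{\mu}^2_v$ is supported in the closed dual cell $\overline v$, and then that there it is proportional to the Euclidean area form $\hodge 1$. The support statement is immediate from Assumption~\ref{ass_support}: its top level reads $U^2_v=\overline v$, so by construction $\boldsymbol{\mu}^2_v$ vanishes outside the dual (``Voronoi'') cell $v=\star t$ attached to the primal node $t\in\mathcal{T}^0$. Nothing further is needed here.

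For the second assertion I would start from the expansion \eqref{lincomb}, $\boldsymbol{\mu}^2_v=\sum_w R^2_{vw}\tilde{\boldsymbol{\lambda}}^2_w$, and recall the elementary fact that on the two-dimensional interface $\Gamma_h$ the lowest-order Whitney $2$-form attached to a facet $w\in\widetilde{\mathcal{T}}^2$ is constant on $w$ and equals $\tfrac{1}{|w|}\hodge 1$, with $|w|$ the Euclidean area of $w$ and the orientation absorbed into $c^2_{vw}$. Since $\mathcal{B}^2$ is fixed by Assumptions~\ref{ass_support}--\ref{ass_sameness}, the coefficients are the equal weights of \eqref{samecoef}, $R^2_{vw}=c^2_{vw}/n_v$. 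Substituting, $\boldsymbol{\mu}^2_v$ becomes a combination of area forms $\hodge 1$ carried by the barycentric subfacets of $v$, hence a multiple of $\hodge 1$ on each of them; the remaining proportionality constant is then pinned down by the Interpolation Property of Prop.~\ref{prop_intpol}, $\int_v\boldsymbol{\mu}^2_v=1$. The metric independence of Remark~\ref{rem_nometric} guarantees that the coefficients $R^2_{vw}$ are untouched by this Euclidean reinterpretation, so that $\hodge 1$ is the only geometric input, entering solely through the Whitney $2$-forms.

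The step I expect to carry the real content is the qualifier \emph{fixed}: one must verify that a single proportionality constant serves throughout $v$, not merely piecewise on each subfacet. Concretely this reduces to comparing the ratios $R^2_{vw}/|w|$ across all $w\subset v$, and I would establish their equality from the geometry of the barycentric refinement — each primal facet is split into subtriangles of equal area, one-sixth of the parent — in combination with the equal-weight Assumption~\ref{ass_sameness}. Once this constancy is secured, the normalisation $\int_v\boldsymbol{\mu}^2_v=1$ from Prop.~\ref{prop_intpol} identifies the multiple uniquely and the proposition follows. I would therefore expect the bulk of the argument to be this bookkeeping of subfacet areas and weights, the support and ``multiple of $\hodge 1$'' parts being essentially immediate from the assumptions already in place.
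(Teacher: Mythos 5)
Your support argument and the facet-by-facet proportionality to $\hodge 1$ are fine, and up to that point you are essentially expanding the paper's own (one-line) proof, which reads ``By construction, see Assumption~\ref{ass_sameness} above.'' The genuine problem sits exactly at the step you flag as carrying the real content, and the justification you propose for it fails. The equal-area property of barycentric subdivision is relative to each parent facet: every subtriangle has area one sixth of \emph{its own} parent. But the dual cell $v=\star t$ of a node $t\in\mathcal{T}^0$ straddles \emph{all} primal facets incident to $t$, and on an unstructured mesh these have different areas. Concretely, with $R^2_{vw}=c^2_{vw}/n_v$ from \eqref{samecoef} and $\tilde{\boldsymbol{\lambda}}^2_w=\pm\frac{1}{|w|}\hodge 1$ on $w$, the density of $\boldsymbol{\mu}^2_v$ on a subtriangle $w$ contained in a parent facet $T$ is $\frac{6}{n_v|T|}$; if $t$ is shared by facets $T_1,T_2$ with $|T_1|\ne|T_2|$, this value jumps inside $v$. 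So the ratios $R^2_{vw}/|w|$, whose constancy you correctly identify as the crux, are constant only within each parent facet, not across the dual cell, and no bookkeeping of barycentric areas can close this gap for a general mesh.

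What your attempt actually exposes is that the proposition, read literally, needs a hypothesis that is never stated: all primal facets sharing the node must have equal area (true for structured meshes such as those in the paper's experiment, false in general). The paper's proof, together with its remark that the link to the volume form is ``inherited from the Whitney complex,'' makes precisely the leap you tried to legitimize --- from equal Whitney \emph{coefficients} (Assumption~\ref{ass_sameness}, a metric-free statement, cf.\ Remark~\ref{rem_nometric}) to equal \emph{densities}, a metric statement; the two coincide only in the locally equal-area case. For a general mesh the defensible conclusion is the weaker one you in fact establish: $\boldsymbol{\mu}^2_v$ is supported in $\overline{v}$, is a multiple of $\hodge 1$ on each facet of the barycentric refinement, and satisfies $\int_v\boldsymbol{\mu}^2_v=1$ by Prop.~\ref{prop_intpol}. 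To obtain the literal statement one must either add the local equal-area hypothesis, or replace \eqref{samecoef} by area-weighted coefficients $R^2_{vw}=c^2_{vw}\,|w|/|v|$ --- which restores a constant density $1/|v|$ but contradicts Assumption~\ref{ass_sameness} and sacrifices the metric-free character of the construction.
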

\begin{proof}
By construction, see Assumption \ref{ass_sameness} above.
\end{proof}
\begin{remark}
The link to the Euclidean volume form is inherited from the Whitney complex.
\end{remark}
\begin{proposition}\label{prop_infsup}
(Stable Duality) Consider a quasi-uniform family of meshes $(\mathcal{T}_h)$ in the coupling interface, with associated B-C and Whitney complexes $(\mathcal{B}_h^q,\mathrm{d})$ and $(\mathcal{W}_h^r,\mathrm{d})$, respectively. The pairing 
\[
\mathcal{B}_h^q\times\mathcal{W}_h^r\to\mathbb{R}:(\boldsymbol{\beta},\boldsymbol{\omega})\mapsto b(\boldsymbol{\beta},\boldsymbol{\omega})=\int_{\Gamma_h}\boldsymbol{\beta}\wedge\boldsymbol{\omega}
\]
is nondegenerate. It satisfies a discrete inf-sup condition uniformly in $h$,
\[
\inf_{\boldsymbol{\omega}\in\mathcal{W}^r_h}\sup_{\boldsymbol{\beta}\in\mathcal{B}^q_h}\frac{b(\boldsymbol{\beta},\boldsymbol{\omega})}{||\boldsymbol{\beta}||\,||\boldsymbol{\omega}||}\ge 1/C_1\,,
\]
where the norms are the graph norms of the trace spaces, $||\cdot||=||\cdot||_{H^{-1/2}_\perp\mathsf{\Lambda}(\mathrm{d},\Gamma_h)}$, \cite[p.~23]{Kurz2012}.
\end{proposition}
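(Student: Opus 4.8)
The plan is to obtain nondegeneracy as a by-product of the inf-sup estimate and to spend the real effort on the uniform bound. Since $\star$ is a bijection $\mathcal{T}^r\xrightarrow{\sim}\mathcal{V}^q$ (cf.~\eqref{defstar}), the spaces $\mathcal{W}^r_h$ and $\mathcal{B}^q_h$ have the same finite dimension; any inf-sup bound with a strictly positive constant therefore forces the matrix $\big(b(\boldsymbol{\mu}^q_v,\boldsymbol{\lambda}^r_t)\big)$ to have trivial left and right kernel, which is exactly nondegeneracy. It thus suffices to produce, for every $\boldsymbol{\omega}\in\mathcal{W}^r_h$, a partner $\boldsymbol{\beta}\in\mathcal{B}^q_h$ realizing the supremum up to a factor independent of $h$.

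I would reduce the discrete estimate to the continuous stable duality by a Fortin argument. At the continuous level the trace spaces $H^{-1/2}_\perp\mathsf{\Lambda}^q(\mathrm{d},\Gamma_h)$ and $H^{-1/2}_\perp\mathsf{\Lambda}^r(\mathrm{d},\Gamma_h)$, $r+q=2$, are in stable duality under $(\boldsymbol{\beta},\boldsymbol{\omega})\mapsto\int_{\Gamma_h}\boldsymbol{\beta}\wedge\boldsymbol{\omega}$; for this I would appeal to the trace-space theory summarized in \cite{Kurz2012}. By the abstract inf-sup lemma this stability descends to the conforming pair $(\mathcal{B}^q_h,\mathcal{W}^r_h)$---conformity being Prop.~\ref{prop_condis}---once one exhibits a projection $P_h:H^{-1/2}_\perp\mathsf{\Lambda}^q(\mathrm{d},\Gamma_h)\to\mathcal{B}^q_h$ which (i) reproduces the pairing against the test space, $b(P_h\boldsymbol{\beta},\boldsymbol{\omega})=b(\boldsymbol{\beta},\boldsymbol{\omega})$ for all $\boldsymbol{\omega}\in\mathcal{W}^r_h$, and (ii) is bounded in the graph norm uniformly in $h$. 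The natural candidate is the de Rham projection $\Pi\boldsymbol{\beta}=\sum_{v\in\mathcal{V}^q}\boldsymbol{\mu}^q_v\int_v\boldsymbol{\beta}$ of Prop.~\ref{prop_comproj}: it is unisolvent by the Interpolation Property (Prop.~\ref{prop_intpol}) and commutes with $\mathrm{d}$, so a single operator controls both $\boldsymbol{\beta}$ and $\mathrm{d}\boldsymbol{\beta}$. Verifying (i), exactly or up to a uniformly controlled consistency error, is the algebraic heart: testing $b(\Pi\boldsymbol{\beta}-\boldsymbol{\beta},\boldsymbol{\lambda}^r_t)$ reduces to the mixed Gram blocks $\int_{\Gamma_h}\boldsymbol{\mu}^q_{\star t}\wedge\boldsymbol{\lambda}^r_{t'}$, and Assumption~\ref{ass_stability}---by removing the refined cells interior to the primal edge---is precisely what keeps the relevant block nonsingular and $h$-uniformly well-conditioned, as its statement anticipates.

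The main obstacle is property (ii): boundedness of $\Pi$, uniform in $h$, in the fractional, negative-order graph norm $\|\cdot\|_{H^{-1/2}_\perp\mathsf{\Lambda}(\mathrm{d},\Gamma_h)}$. Because the $H^{-1/2}$ norm is nonlocal, a cellwise scaling estimate is not directly available, so my plan is to prove boundedness at integer-order endpoints and interpolate. Exploiting the metric-free, explicit coefficients $R^q_{vw}$ (Remark~\ref{rem_nometric}) together with the local volume-form structure of Prop.~\ref{prop_voronoi}, I would transport each support patch $U^q_v$ to a reference configuration and obtain cellwise bounds for $\Pi$ in $L^2$ and, by a duality argument (the adjoint being bounded on $H^1$), in $H^{-1}$; via the commuting property the same endpoints hold for $\mathrm{d}\circ\Pi$, so the full graph norm is covered. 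Summing over cells using quasi-uniformity yields global integer-order bounds with $h$-independent constants, and operator interpolation to the intermediate index $-1/2$ then delivers the desired bound. Combining its constant with the continuous duality constant produces $C_1$, and with it the nondegeneracy. This is the differential-forms counterpart of the Buffa--Christiansen stability theorem; where a shortcut is acceptable, one may alternatively read off the estimate from \cite[Sect.~4]{Buffa2007} after restoring the Euclidean metric via the Hodge operator $\hodge$, noting that $\hodge\mathcal{B}^1$ is the div-conforming space treated there.
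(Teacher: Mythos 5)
Your proposal and the paper part ways at the outset: the paper does not prove this proposition at all, but simply cites \cite[Sect.~3.3]{Buffa2007}, in particular Props.~3.12--3.14 there (noting that a mild local non-degeneracy condition on the mesh is required), since the B-C spaces constructed here coincide with the ones in that reference. Your closing fallback sentence is therefore, in substance, the paper's entire proof --- except that the relevant material is Sect.~3.3 of \cite{Buffa2007}, not Sect.~4, and the argument there is not a Fortin argument: it rests on discrete Hodge/Helmholtz decompositions of the primal and dual spaces and on uniform stability of those decompositions.

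The Fortin route you offer as your main argument has a genuine gap which, as stated, cannot be repaired. Your candidate Fortin operator is the de Rham interpolation $\Pi\boldsymbol{\beta}=\sum_{v\in\mathcal{V}^q}\boldsymbol{\mu}^q_v\int_v\boldsymbol{\beta}$, and your plan for its uniform boundedness (condition (ii)) is operator interpolation between $L^2$ and $H^{-1}$ endpoint bounds. But for $q=0,1$ the operator $\Pi$ is not even defined on $L^2$, let alone bounded there: $\int_v\boldsymbol{\beta}$ is an integral over a dual node or a dual edge, and point values and line integrals are not continuous functionals on $L^2$ or on $H^{-1/2}_\perp\mathsf{\Lambda}^q(\mathrm{d},\Gamma_h)$. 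This is precisely why Prop.~\ref{prop_comproj} of the paper is restricted to ``sufficiently regular subspaces'', and why the finite element exterior calculus literature resorts to smoothed projections; interpolation of operators cannot rescue an operator that is undefined at one endpoint. Separately, your Fortin reproduction property (i) is asserted but never made plausible: $\Pi$ preserves dual-cell integrals, not pairings $b(\cdot,\boldsymbol{\lambda}^r_t)$ against all Whitney forms, and Assumption~\ref{ass_stability} only secures invertibility of the mixed Gram matrix at each fixed $h$ --- that yields nondegeneracy mesh by mesh, but the uniform-in-$h$ inf-sup constant, which is the entire content of the proposition, is left unproven. (Your reduction of nondegeneracy to the inf-sup bound via the dimension count from $\star$ is fine; it is the inf-sup bound itself that your argument does not reach.)
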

\begin{proof}
See \cite[Sect.~3.3]{Buffa2007}, in particular their Props.~3.12-3.14. Some mild local non-degeneracy condition is required.
\end{proof}
\begin{remark}\label{rem_pairing}
The pairing $b(\cdot,\cdot)$ is independent of metric. It is related to the metric-dependent $L^2$ inner product $(\cdot,\cdot)$ by
$b(\boldsymbol{\beta},\boldsymbol{\omega})=(\hodge\boldsymbol{\beta},\boldsymbol{\omega})$\,.
\end{remark}
\begin{remark}
The approximation properties of the B-C forms are discussed in \cite[Sect.~3.1]{Buffa2007}.
\end{remark}
\subsection{Extension to interfaces with boundary}\label{sec_open_interface}
The construction of appropriate Lagrange multiplier spaces for mortar element methods including interfaces with boundaries can be an involved task, compare Remark \ref{rem_mortar}. We therefore skip the general case and restrict ourselves to an illustrative example, to highlight the treatment of boundaries in the context of B-C spaces.\par
%
%
The choice of the spaces depends on the intended use, whether a Dirichlet condition is to be imposed on $\mathcal{W}$ \cite[Sect.~IV]{Andriulli2008}, or not. The construction in \cite[Sect.~4.2]{Buffa2007} aims at a B-C complex $(\mathcal{B},\mathrm{d})$ in duality with the complex $(\mathcal{W}_0,\mathrm{d})$. In contrast, we will construct a B-C complex $(\mathcal{B}_0,\mathrm{d})$ with vanishing boundary trace, in duality with the complex $(\mathcal{W},\mathrm{d})$.\par
The dual mesh $\mathcal{V}$ is truncated at the boundary $\partial\Gamma_h$. Assumptions \ref{ass_support} and \ref{ass_sameness} stay intact. Hence no modification in the construction of B-C $2$-forms, see Sect.~\ref{BC_2-Forms}.\par
Consider the B-C $1$-form associated with a dual edge $\star t$, such that $\overline{t}\cap\partial\Gamma_h\ne\emptyset$. Either $\overline{t}$ is contained in the boundary, or it has one point in common with the boundary. The first case is treated identically to the interior case. A typical configuration for the second case is shown in 
\cite[Fig.~4]{Buffa2007}. The construction runs along the same lines as in Sect.~\ref{B-C_1-Forms}, except for Assumption \ref{ass_stability}, which is discarded. 
\par
The construction of B-C $0$-forms follows Sect.~\ref{BC_0-Forms}, even in the part of the support that interacts with the boundary.
\begin{remark}(Partition of Unity violation)
The Partition of Unity property is violated for the basis forms of $\mathcal{B}_0^0$, due to the boundary condition. The sum of the basis forms yields the constant function one in the interior of the domain. A linear decay to zero occurs in the outmost layer of elements $w\in\widetilde{\mathcal{T}}$.
\end{remark}
\begin{remark}
The case studied in \cite[Sect.~4.2]{Buffa2007} can be tackled by our construction principles as well, with a slight modification of Assumption \ref{ass_stability}.
\end{remark}
\section{Projection-based mesh coupling operators}\label{sec_coupling_operators}
\subsection{Definition of mesh coupling operators}\label{sec_def_operators}
For $ji\in\{12;21\}$ the projection-based mesh coupling operators $Q^r_{ji}:\mathcal{W}^r_i\to\mathcal{W}^r_j:\boldsymbol{\omega}_i\mapsto\boldsymbol{\omega}_j$ are defined by
\begin{equation}\label{defop}
b(\boldsymbol{\beta}_j,\boldsymbol{\omega}_j)=b(\boldsymbol{\beta}_j,\boldsymbol{\omega}_i)\quad\forall\boldsymbol{\beta}_j\in\mathcal{M}^q_j\,,
\end{equation}
where $\mathcal{M}^q_j$ is the discrete Lagrange multiplier space, and the pairing $b(\cdot,\cdot)$ is defined in Prop.~\ref{prop_infsup}. In particular, we pick $\mathcal{M}^q_j=\mathcal{B}^q_j$ to receive $Q_{ji}^{r,\textnormal{B-C}}$. As discussed in Sect.~\ref{sec_projection}, $\mathcal{M}^q_j=\mathcal{W}^q_j$ is not an option, due to lack of stability.\par
For comparison we also consider the Galerkin case $\mathcal{M}^q_j=\hodge^{-1}\mathcal{W}^r_j$. In the light of Remark \ref{rem_pairing}, this choice corresponds to $Q_{ji}^{r,\textnormal{Galerkin}}$.
\begin{remark}\label{rem_extension} (Extension to trace spaces)
The pairing $b(\cdot,\cdot):\mathcal{B}_h^q\times\mathcal{W}_h^r\to\mathbb{R}$ extends continuously to a non-degenerate pairing $H^{-1/2}_\perp\mathsf{\Lambda}^q(\mathrm{d},\Gamma_h)\times H^{-1/2}_\perp\mathsf{\Lambda}^r(\mathrm{d},\Gamma_h)\to\mathbb{R}$. This has been exploited in \cite[Thm.~2]{Buffa2003d} for $r=q=1$, where the representation in terms of rotated Euclidean vector proxies reads
\[
\boldsymbol{H}^{-1/2}_\parallel(\mathrm{div}_\Gamma,\Gamma_h)\times\boldsymbol{H}^{-1/2}_\parallel(\mathrm{div}_\Gamma,\Gamma_h):(\boldsymbol{b},\boldsymbol{w})\mapsto\int_{\Gamma_h}(\boldsymbol{b}\times\boldsymbol{n})\cdot\boldsymbol{w}\,\mathrm{d}\Gamma\,.
\]
The situation is different in the Galerkin case. In general, {\em except for $r=0$}, the $L^2$ inner product {\em does not} extend to $H^{-1/2}_\perp\mathsf{\Lambda}^r(\mathrm{d},\Gamma_h)$. This observation renders the $Q_{ji}^{r,\textnormal{Galerkin}}$ dubious for $r>0$. For the same reason, the setting in the literature is frequently based on a {\em componentwise} duality between $H^{-1/2}(\Gamma_h)$ and $H^{1/2}(\Gamma_h)$, compare \cite[eq.~(2.12)]{Belgacem2001}, \cite{Hoppe1999}.
\end{remark}
By invoking the basis representations for $\boldsymbol{\beta}_j$, $\boldsymbol{\omega}_i$, and $\boldsymbol{\omega}_j$, \eqref{defop} yields the matrix equation
\begin{equation}\label{defopmatrix}
[M]_{jj}^r\{\omega_j\}=[M]_{ji}^r\{\omega_i\}\,,
\end{equation}
where vectors $\{\omega_i\}$ and $\{\omega_j\}$ collect the known and the unknown degrees of freedom, respectively. In particular,
\begin{align*}
[M]_{ji}^{r,\textnormal{B-C}}&=\int_{\Gamma_{j,h}}\{\boldsymbol{\mu}_j^q\}\wedge\{\boldsymbol{\lambda}_i^r\}^\top\,,\\
[M]_{ji}^{r,\textnormal{Galerkin}}&=\int_{\Gamma_{j,h}}\{\boldsymbol{\lambda}_j^r\}\cdot\{\boldsymbol{\lambda}_i^r\}^\top\,\mathrm{d}\Gamma\,,
\end{align*} 
with similar expressions for $i=j$.\par
The matrix $[M]_{jj}^{r,\textnormal{Galerkin}}$ is a symmetric positive definite mass matrix, while the matrix $[M]_{jj}^{r,\textnormal{B-C}}$ is an unsymmetric invertible matrix, with a very low condition number. Therefore we may formally solve \eqref{defopmatrix} for $\{\omega_j\}$, to obtain the matrix representation $[Q]_{ji}^r=\bigl([M]_{jj}^r\bigr)^{-1}[M]_{ji}^r$ of the mesh coupling operator. We will come back to the condition numbers in Sect.~\ref{sec_experiment}. 
\subsection{Properties of mesh coupling operators}
We assume that either the boundary trace of the B-C spaces or the boundary trace of the Whitney spaces vanishes, compare Sect.~\ref{sec_open_interface}. This prerequisite is trivially fulfilled for interfaces without boundary. We are now in the position to state the main result of the paper.
\begin{theorem}(Commuting Property)\label{thm_struct_preserv}
The mesh coupling operators $Q_{ji}^{r,\textnormal{B-C}}$ are commuting projectors of Whitney spaces, that is, the following diagram commutes.
\[
\begin{CD}
\boldsymbol{\omega}_i @>\displaystyle\mathrm{d}>> \mathrm{d}\boldsymbol{\omega}_i\\
@V\displaystyle Q_{ji}^{r,\textnormal{B-C}}VV @VV\displaystyle Q_{ji}^{r+1,\textnormal{B-C}}V\\[-0.3\baselineskip]
\boldsymbol{\omega}_j @>\displaystyle\mathrm{d}>> \mathrm{d}\boldsymbol{\omega}_j
\end{CD}
\]
\end{theorem}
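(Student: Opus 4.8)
The plan is to verify the commuting identity $\mathrm{d}\circ Q_{ji}^{r,\textnormal{B-C}}=Q_{ji}^{r+1,\textnormal{B-C}}\circ\mathrm{d}$ directly from the variational definition \eqref{defop}, leaning on the Complex Property (Prop.~\ref{prop_complex}) and the Stable Duality (Prop.~\ref{prop_infsup}). Fix $\boldsymbol{\omega}_i\in\mathcal{W}^r_i$ with $r\in\{0,1\}$, put $q=2-r$, and write $\boldsymbol{\omega}_j=Q_{ji}^{r,\textnormal{B-C}}\boldsymbol{\omega}_i$. Both $\mathrm{d}\boldsymbol{\omega}_j$ and $Q_{ji}^{r+1,\textnormal{B-C}}(\mathrm{d}\boldsymbol{\omega}_i)$ live in $\mathcal{W}^{r+1}_j$, which is paired nondegenerately with $\mathcal{B}^{q-1}_j$ by Prop.~\ref{prop_infsup}. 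Hence it suffices to show that $\mathrm{d}\boldsymbol{\omega}_j$ obeys the same defining relation as $Q_{ji}^{r+1,\textnormal{B-C}}(\mathrm{d}\boldsymbol{\omega}_i)$, namely
\[
b(\boldsymbol{\gamma}_j,\mathrm{d}\boldsymbol{\omega}_j)=b(\boldsymbol{\gamma}_j,\mathrm{d}\boldsymbol{\omega}_i)\qquad\forall\,\boldsymbol{\gamma}_j\in\mathcal{B}^{q-1}_j\,,
\]
after which uniqueness forces $\mathrm{d}\boldsymbol{\omega}_j=Q_{ji}^{r+1,\textnormal{B-C}}(\mathrm{d}\boldsymbol{\omega}_i)$.

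The engine of the argument is integration by parts for the pairing. For a test form $\boldsymbol{\gamma}_j\in\mathcal{B}^{q-1}_j$ and any admissible $r$-form $\boldsymbol{\omega}$, the Leibniz rule gives $\mathrm{d}(\boldsymbol{\gamma}_j\wedge\boldsymbol{\omega})=\mathrm{d}\boldsymbol{\gamma}_j\wedge\boldsymbol{\omega}+(-1)^{q-1}\boldsymbol{\gamma}_j\wedge\mathrm{d}\boldsymbol{\omega}$, whose left-hand side integrates to zero over $\Gamma_h$ by Stokes' Theorem. This yields the transfer relation $b(\boldsymbol{\gamma}_j,\mathrm{d}\boldsymbol{\omega})=(-1)^{q}\,b(\mathrm{d}\boldsymbol{\gamma}_j,\boldsymbol{\omega})$. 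Applying it to $\boldsymbol{\omega}_j$, then invoking the Complex Property so that $\mathrm{d}\boldsymbol{\gamma}_j\in\mathcal{B}^q_j$ is a \emph{legitimate} test form in the definition \eqref{defop} of $Q_{ji}^{r,\textnormal{B-C}}$ (giving $b(\mathrm{d}\boldsymbol{\gamma}_j,\boldsymbol{\omega}_j)=b(\mathrm{d}\boldsymbol{\gamma}_j,\boldsymbol{\omega}_i)$), and finally transferring back with the same relation applied to $\boldsymbol{\omega}_i$, the sign factors $(-1)^q$ cancel and we arrive at $b(\boldsymbol{\gamma}_j,\mathrm{d}\boldsymbol{\omega}_j)=b(\boldsymbol{\gamma}_j,\mathrm{d}\boldsymbol{\omega}_i)$ for every $\boldsymbol{\gamma}_j$, as required. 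The projector aspect then follows from nondegeneracy by specializing to $i=j$, where \eqref{defop} forces $Q_{jj}^{r,\textnormal{B-C}}=\mathrm{id}$.

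The hard part will be justifying the vanishing of the boundary term in the integration by parts, since the whole argument collapses to Stokes on $\boldsymbol{\gamma}_j\wedge\boldsymbol{\omega}$. When $\partial\Gamma_h=0$ this is automatic. For an interface with boundary it is precisely the standing hypothesis stated before the theorem—that either the boundary trace of the B-C spaces or that of the Whitney spaces vanishes—that makes the trace of $\boldsymbol{\gamma}_j\wedge\boldsymbol{\omega}$ vanish on $\partial\Gamma_h$; this is exactly what the two complementary constructions $(\mathcal{B}_0,\mathrm{d})$ versus $(\mathcal{B},\mathrm{d})$ of Sect.~\ref{sec_open_interface} are designed to provide. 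A secondary point to keep honest is the degree/duality bookkeeping ($q=2-r$, so $\mathcal{B}^{q-1}_j$ is dual to $\mathcal{W}^{r+1}_j$), which guarantees that the concluding nondegeneracy step of Prop.~\ref{prop_infsup} applies to the correct pair of spaces; the internal sign conventions of Prop.~\ref{prop_extder} do not intrude, as the $(-1)^q$ factors introduced and removed by the two integrations by parts cancel identically.
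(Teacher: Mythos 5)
Your proof is correct and takes essentially the same route as the paper's: both use the Complex Property (Prop.~\ref{prop_complex}) to admit $\mathrm{d}\boldsymbol{\gamma}_j$ as a test form in \eqref{defop}, integrate by parts with the standing boundary-trace hypothesis eliminating the boundary term, and identify $\mathrm{d}\boldsymbol{\omega}_j$ with $Q_{ji}^{r+1,\textnormal{B-C}}\mathrm{d}\boldsymbol{\omega}_i$ through the defining variational relation. Your explicit $(-1)^q$ sign bookkeeping and the appeal to nondegeneracy via Prop.~\ref{prop_infsup} to justify uniqueness merely spell out steps the paper leaves implicit.
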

\begin{proof}
From the definition \eqref{defop} of $Q_{ji}^{r,\textnormal{B-C}}$ it follows that
\[
b(\boldsymbol{\beta}_j,\boldsymbol{\omega}_j-\boldsymbol{\omega}_i)=0\quad\forall\boldsymbol{\beta}_j\in\mathcal{B}^q_j\,.
\]
Consider $\boldsymbol{\alpha}_j\in\mathcal{B}^{q-1}_j$. Thanks to Prop.~\ref{prop_complex} it holds that $\mathrm{d}\boldsymbol{\alpha}_j\in\mathcal{B}^q_j$. Hence
\[
b(\mathrm{d}\boldsymbol{\alpha}_j,\boldsymbol{\omega}_j-\boldsymbol{\omega}_i)=0\quad\forall\boldsymbol{\alpha}_j\in\mathcal{B}^{q-1}_j\,.
\]
By partial integration it follows that
\[
b(\boldsymbol{\alpha}_j,\mathrm{d}\boldsymbol{\omega}_j-\mathrm{d}\boldsymbol{\omega}_i)=0\quad\forall\boldsymbol{\alpha}_j\in\mathcal{B}^{q-1}_j\,.
\]
The above prerequisite ensures that there is no contribution due to the boundary. The term $\mathrm{d}\boldsymbol{\omega}_j$ is contained in the Whitney space $\mathcal{W}^{r+1}_j$, and is -- by definition -- identical to $Q_{ji}^{r+1,\textnormal{B-C}}\mathrm{d}\boldsymbol{\omega}_i$. This completes the proof.
\end{proof}
\begin{remark}
The operators $Q_{ji}^{r,\textnormal{Galerkin}}$ fail to commute with the exterior derivative, since $(\hodge\mathcal{W}^r,\mathrm{d})$ does not exhibit the complex property.
\end{remark}
\begin{proposition}\label{prop_opt}
(Quasi-Optimal Projection) The projection error is bounded by the best approximation error in the norm of the trace space,
\[
||\boldsymbol{\omega}_j-\boldsymbol{\omega}_i||\le C_1C_2\inf_{\boldsymbol{\eta}\in\mathcal{W}^r_j}||\boldsymbol{\eta}-\boldsymbol{\omega}_i||\,,
\]
where $||\cdot||$ and $C_1$ are defined in Prop.~\ref{prop_infsup}.
\end{proposition}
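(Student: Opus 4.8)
The plan is to recognize $Q_{ji}^{r,\textnormal{B-C}}$ as a Petrov--Galerkin projection onto $\mathcal{W}^r_j$ and to combine the inf-sup stability of Prop.~\ref{prop_infsup} with the continuity of the pairing $b(\cdot,\cdot)$. First I would introduce the operator $\Pi_j\colon H^{-1/2}_\perp\mathsf{\Lambda}^r(\mathrm{d},\Gamma_h)\to\mathcal{W}^r_j$ defined, for a trace-space argument $\boldsymbol{\phi}$, by $b(\boldsymbol{\beta}_j,\Pi_j\boldsymbol{\phi})=b(\boldsymbol{\beta}_j,\boldsymbol{\phi})$ for all $\boldsymbol{\beta}_j\in\mathcal{B}^q_j$, so that $\boldsymbol{\omega}_j=\Pi_j\boldsymbol{\omega}_i$ by \eqref{defop}. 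By Prop.~\ref{prop_infsup} the pairing is non-degenerate on $\mathcal{B}^q_j\times\mathcal{W}^r_j$ and satisfies the discrete inf-sup condition; since $\dim\mathcal{B}^q_j=\dim\mathcal{W}^r_j$ through the bijection $\star$, the operator $\Pi_j$ is well defined. Because $\Pi_j\boldsymbol{\eta}=\boldsymbol{\eta}$ for every $\boldsymbol{\eta}\in\mathcal{W}^r_j$ (uniqueness of the solution), $\Pi_j$ is a bounded linear \emph{projection} onto $\mathcal{W}^r_j$.

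Next I would bound its operator norm. For $\boldsymbol{\phi}$ in the trace space put $\boldsymbol{\psi}=\Pi_j\boldsymbol{\phi}\in\mathcal{W}^r_j$. Applying the inf-sup estimate of Prop.~\ref{prop_infsup} to $\boldsymbol{\psi}$, replacing $b(\boldsymbol{\beta}_j,\boldsymbol{\psi})$ by $b(\boldsymbol{\beta}_j,\boldsymbol{\phi})$ through the defining equation, and using the continuity of the extended pairing from Remark~\ref{rem_extension} with constant $C_2$ gives
\[
\|\Pi_j\boldsymbol{\phi}\|\le C_1\sup_{\boldsymbol{\beta}_j\in\mathcal{B}^q_j}\frac{b(\boldsymbol{\beta}_j,\boldsymbol{\phi})}{\|\boldsymbol{\beta}_j\|}\le C_1C_2\,\|\boldsymbol{\phi}\|,
\]
so $\|\Pi_j\|\le C_1C_2$.

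To turn this into the error bound I would write $\boldsymbol{\omega}_i-\boldsymbol{\omega}_j=(I-\Pi_j)\boldsymbol{\omega}_i$ and exploit the projection property $(I-\Pi_j)\boldsymbol{\eta}=0$ for $\boldsymbol{\eta}\in\mathcal{W}^r_j$ to obtain $(I-\Pi_j)\boldsymbol{\omega}_i=(I-\Pi_j)(\boldsymbol{\omega}_i-\boldsymbol{\eta})$ for every such $\boldsymbol{\eta}$. Taking norms and the infimum over $\boldsymbol{\eta}$ yields
\[
\|\boldsymbol{\omega}_i-\boldsymbol{\omega}_j\|\le\|I-\Pi_j\|\inf_{\boldsymbol{\eta}\in\mathcal{W}^r_j}\|\boldsymbol{\eta}-\boldsymbol{\omega}_i\|.
\]

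The delicate point, and the main obstacle, is passing from $\|I-\Pi_j\|$ to the \emph{product} constant $C_1C_2$: a naive triangle inequality only gives $\|I-\Pi_j\|\le 1+C_1C_2$, which falls short of the stated bound. Here I would invoke the Xu--Zikatanov identity, namely that for a non-trivial bounded projection $P$ on a Hilbert space one has $\|I-P\|=\|P\|$. Since the trace space is Hilbert and $\Pi_j$ is neither $0$ nor $I$, this gives $\|I-\Pi_j\|=\|\Pi_j\|\le C_1C_2$, and hence the claimed quasi-optimality. I would finally remark that the cross-mesh character (the data $\boldsymbol{\omega}_i$ and its image $\boldsymbol{\omega}_j$ living on different meshes) causes no difficulty, because both Whitney spaces embed into the common trace space $H^{-1/2}_\perp\mathsf{\Lambda}^r(\mathrm{d},\Gamma_h)$ in whose norm all quantities above are measured.
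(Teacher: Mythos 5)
Your proof is correct and follows essentially the same route as the paper, whose own proof is a one-line appeal to ``standard Babu\v{s}ka-Brezzi theory'' given the extension of $b(\cdot,\cdot)$ from Remark~\ref{rem_extension}, the discrete inf-sup condition, and the continuity bound with constant $C_2$; you have simply written out the underlying Petrov--Galerkin projection argument in full. In fact, your explicit invocation of the Kato/Xu--Zikatanov identity $\|I-\Pi_j\|=\|\Pi_j\|$ supplies a detail the paper leaves implicit: the naive triangle-inequality version of the classical argument only yields the constant $1+C_1C_2$, so that step is precisely what justifies the sharp constant $C_1C_2$ claimed in the statement.
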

\begin{proof}
Consider the extension of $b(\cdot,\cdot)$ to the trace spaces from Remark \ref{rem_extension}. The proposition follows from the discrete inf-sup condition and the continuity
\[
b(\boldsymbol{\beta},\boldsymbol{\omega})\le C_2||\boldsymbol{\beta}||\,||\boldsymbol{\omega}||\quad\forall
(\boldsymbol{\beta},\boldsymbol{\omega})\in H^{-1/2}_\perp\mathsf{\Lambda}^q(\mathrm{d},\Gamma_h)\times H^{-1/2}_\perp\mathsf{\Lambda}^r(\mathrm{d},\Gamma_h)\,,
\]
as a standard result of Babu\v{s}ka-Brezzi theory.
\end{proof}
\section{Numerical experiment}\label{sec_experiment}
\subsection{Description of the test case}\label{sec_test_case}
\begin{figure}
\centering
\includegraphics[width=0.5\textwidth]{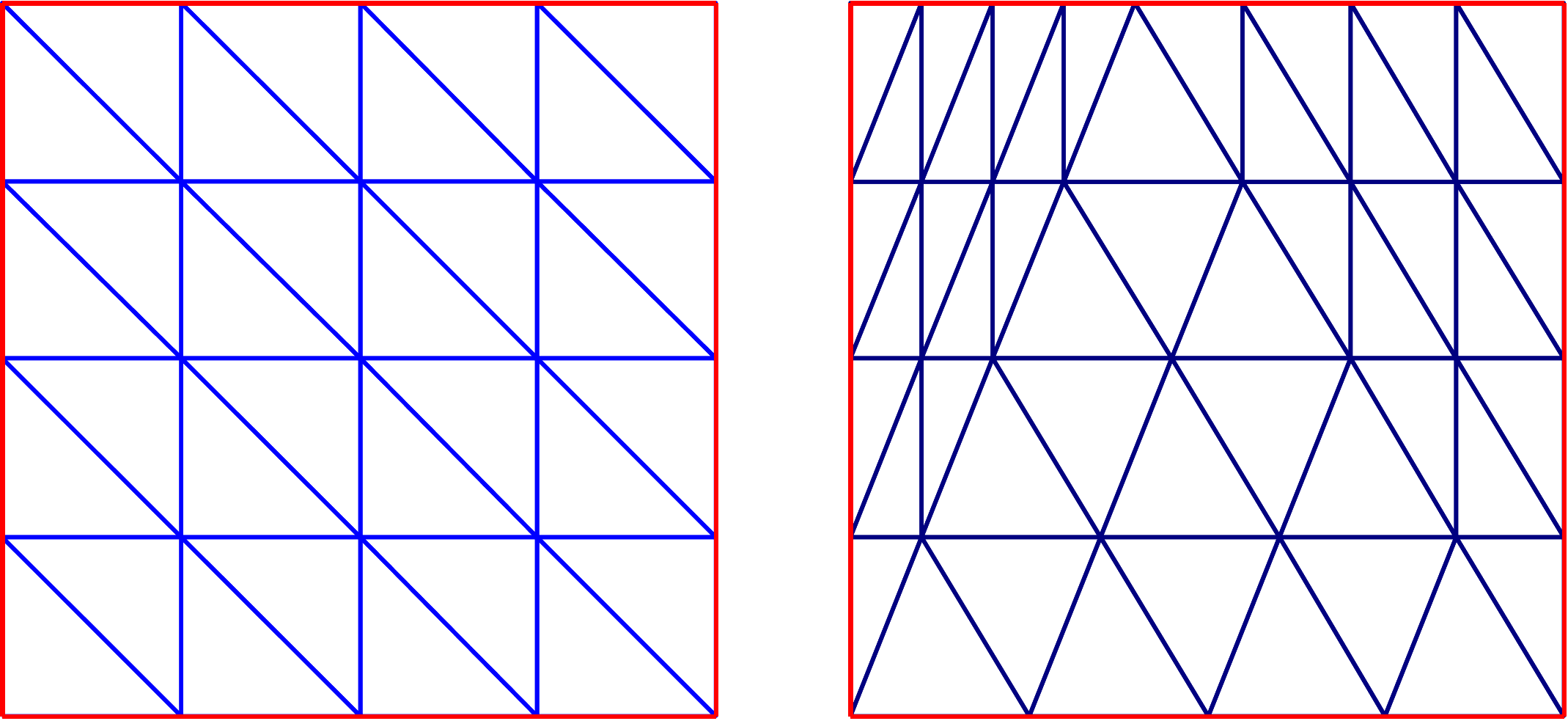}
\caption{\label{fig_Test_Case}Two different initial meshes on a unit square. The left mesh is associated with index $i$ (source mesh), the right one with index $j$ (target mesh).}
\end{figure}
A demonstrator has been implemented in {\sc Matlab}\textsuperscript{\textregistered}, based on the LehrFEM software \cite{Burtscher2013}.\par
The numerical experiment is based on the meshes depicted in Fig.~\ref{fig_Test_Case}. The figure shows the coarsest meshes. Uniform $h$-refinement is accomplished by subdividing each triangle into four triangles.\par
The numerical experiment is concerned with the Whitney complex $(\mathcal{W},\mathrm{d})$, the B-C complex $(\mathcal{B}_0,\mathrm{d})$, and the mesh coupling operators $Q_{ji}^{r,\textnormal{deRham}}$, $Q_{ji}^{r,\textnormal{Galerkin}}$, and $Q_{ji}^{r,\textnormal{B-C}}$. Operator $Q_{ji}^{r,\textnormal{deRham}}$ relies on straightforward interpolation by Whitney and de Rham maps. The other operators were defined in Sect.~\ref{sec_def_operators}. Values $r=0$ and $r=1$ correspond to the scalar and the vector case, respectively.\par
We strive for an implementation of $Q_{ji}^{r,\textnormal{B-C}}$ with linear complexity, by solving \eqref{defopmatrix} for $\{\omega_j\}$. The computation of $[M]_{ji}^r$ involves integration of products of basis functions defined on different meshes. For this purpose, a common triangulation is constructed by a marching front algorithm with linear complexity \cite{Gander2013}. On the common triangulation, standard quadrature rules can be used.\footnote{As an aside, we remark that the same kind of algorithm is available for intersection of tetrahedral meshes in 3D.}\par
\begin{figure}
\centering
\includegraphics[width=0.6\textwidth]{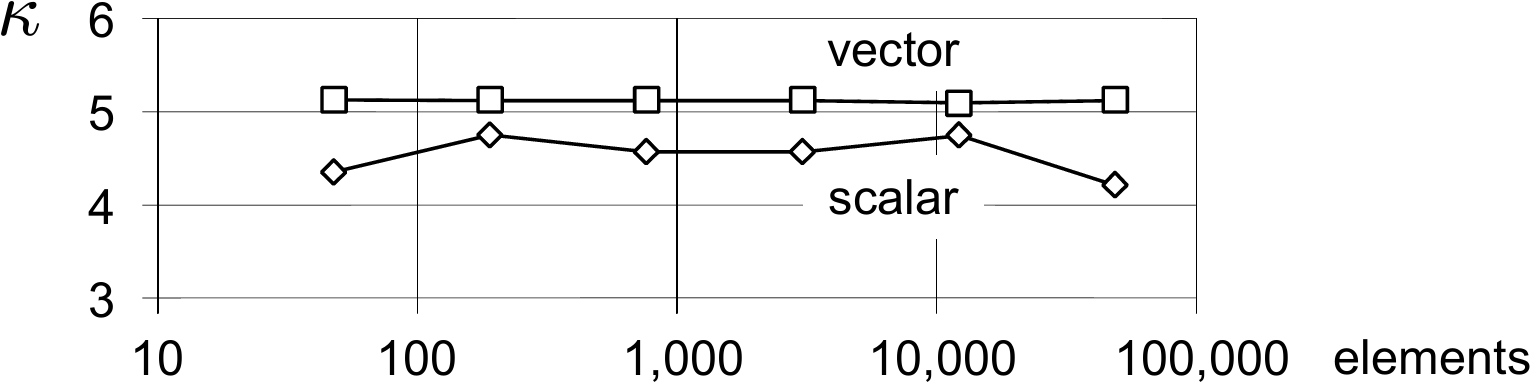}
\caption{\label{fig_Condition_Number}Condition number $\kappa$ of matrix $[M]_{jj}^r$ versus number of elements for regular mesh refinement in the B-C case. For comparison, we report $\kappa\approx 48$ for the Galerkin scalar and $\kappa\approx 20$ for the Galerkin vector case.}
\end{figure}
Eq.~\eqref{defopmatrix} is solved by a few Conjugate-Gradient Squared (CGS) steps \cite{Sonneveld1989}. Fig.~\ref{fig_Condition_Number} shows that the condition number $\kappa$ for $[M]_{jj}^r$ is $\kappa\approx 5$. This resulted in $5\ldots 8$ CGS steps for a relative residual of $10^{-6}$. Since the condition number is independent of regular mesh refinement we maintain linear complexity.\par
We work with smooth scalar and vector data,
\begin{equation}\label{initial}
\boldsymbol{\omega}^0(x,y)=\sin(\pi x)\sin(\pi y)\,,\quad
\boldsymbol{\omega}^1(x,y)=\sin(\pi y)\,\mathrm{d}x+\sin(\pi x)\,\mathrm{d}y\,,
\end{equation}
where $(x,y)$ are Cartesian coordinates on the unit square. The interpolants, nodal and edge, on the source mesh are denoted by $\boldsymbol{\omega}^r_i$.\par
To verify the implementation we numerically compute $\mathrm{d}\,Q_{ji}^1\,\mathrm{d}\,\boldsymbol{\omega}^0_i$, which should be zero in the commuting case, up to round-off. The experiment confirms that $Q_{ji}^{r,\textnormal{deRham}}$ and $Q_{ji}^{r,\textnormal{B-C}}$ enjoy the commuting property, while $Q_{ji}^{r,\textnormal{Galerkin}}$ does not.
\subsection{Results}\label{sec_results}
\subsubsection{Experiment 1: Deterioration of data while being repeatedly mapping back and forth}
\begin{figure}
\centering
\includegraphics[width=0.9\textwidth]{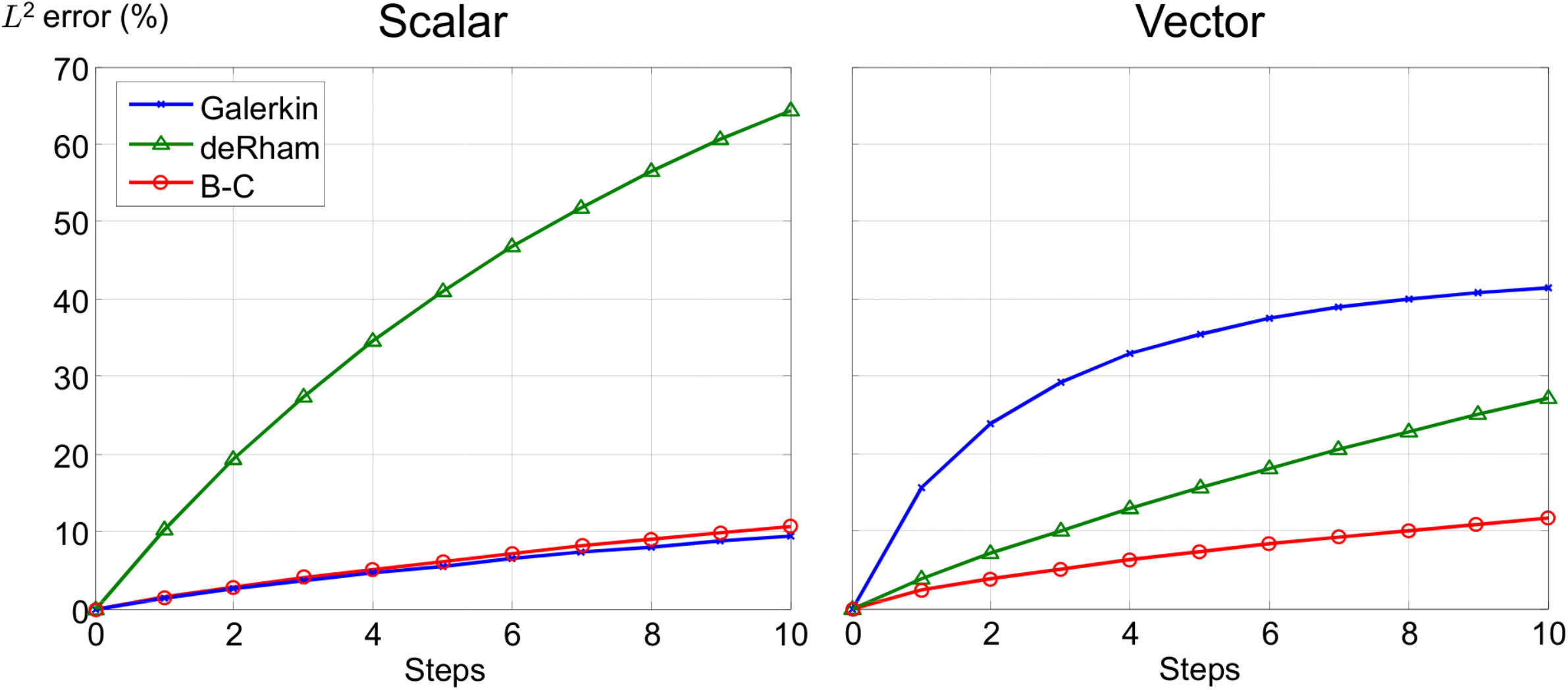}
\caption{\label{fig_Experiment_1}The initial data \eqref{initial} is set on the source mesh. The data deteriorates while being mapped back and forth repeatedly between both meshes. It is well-known that the interpolation method is highly diffusive in the scalar case. Interestingly, this is not observed in the vector case. The B-C-based projection method exhibits the best performance.}
\end{figure}
This experiment is conducted after two successive $h$-refinements of the initial mesh shown in Fig.~\ref{fig_Test_Case}. We pick initial data $\boldsymbol{\omega}^r_i$ according to \eqref{initial}, and map it from source to target mesh. Then, the roles of the meshes are interchanged, and the data is mapped back. These steps are applied repeatedly. The process is diffusive, and we study the deterioration of the data in terms of the relative $L^2$ error after $\nu=0,1,\ldots$ steps,
\[
\mathrm{err}_\nu=\frac{||\bigl((Q_{ij}^rQ_{ji}^r)^\nu-\mathrm{Id}\bigr)\boldsymbol{\omega}_i^r||}{||\boldsymbol{\omega}_i^r||}\cdot 100\%\,,
\]
see Fig.~\ref{fig_Experiment_1}. It is well-known that nodal interpolation is much less accurate than scalar Galerkin projection \cite[p.~292]{Canuto2007}, \cite[Sect.~1.4]{Flemisch2007}. B-C-based projection is similar to Galerkin projection. In the vector case, surprisingly, edge interpolation is more accurate than Galerkin projection. Similar findings have been reported in \cite[Fig.~2, Fig.~4]{Journeaux2014}. The experiment can be recognized as power iteration, an eigenvalue algorithm. The Galerkin approach seems to have a dominant eigenvalue, and the data is quickly reduced to the related eigenvector. In the vector case, B-C-based projection has the best performance, comparable to the scalar case.
\subsubsection{
Experiment 2: Convergence rates under $h$-refinement}
\begin{figure}
\centering
\includegraphics[width=0.9\textwidth]{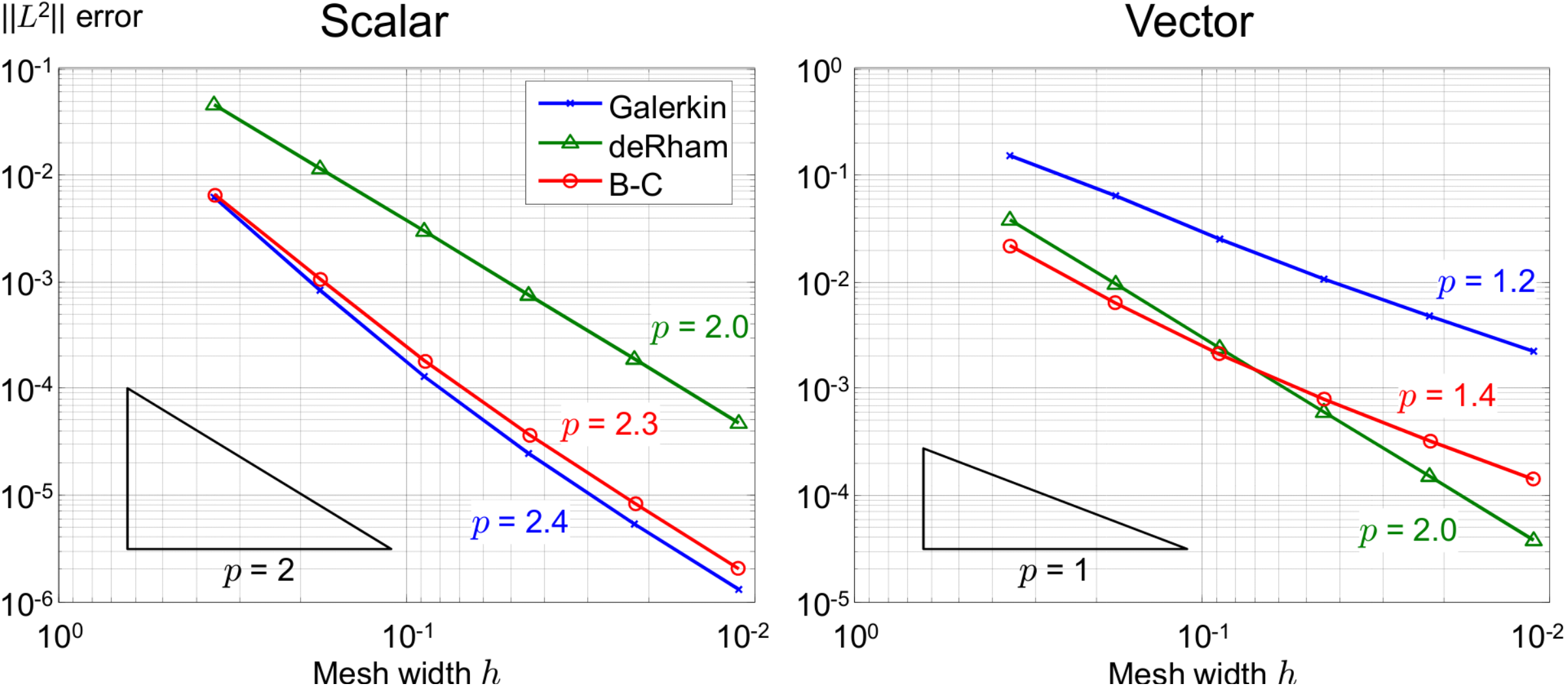}
\caption{\label{fig_Experiment_2_L2}The initial data \eqref{initial} is set on the source mesh, and mapped back and forth once. Convergence of the error in the $L^2$ norm for uniform $h$-refinement is studied. The theoretical convergence rates for smooth data are $p=2$ and $p=1$ in the scalar and vector case, respectively. The indicated convergence rates are based on linear regression.}
\end{figure}
We map back and forth only once, and study the convergence of the result under a uniform $h$-refine\-ment of both meshes, that is,
\[
\mathrm{err}_h=||(Q_{ij}^rQ_{ji}^r-\mathrm{Id})\boldsymbol{\omega}_i^r||\,.
\]
Since the projection error is bounded by the best approximation error, the theoretical convergence rates for smooth data are $p=2$ and $p=1$ in the scalar and vector case, respectively.\footnote{Some caution has to be exerted here, because the Whitney interpolants are not as smooth as required. For a comprehensive discussion about the approximation properties of finite element differential forms see \cite[Sect.~5.4]{Arnold2010}.} The experimental $L^2$ convergence rates are depicted in Fig.~\ref{fig_Experiment_2_L2}, they are as expected, except for the vector de Rham case, where superconvergence seems to occur.\par
\begin{figure}
\centering
\includegraphics[width=0.9\textwidth]{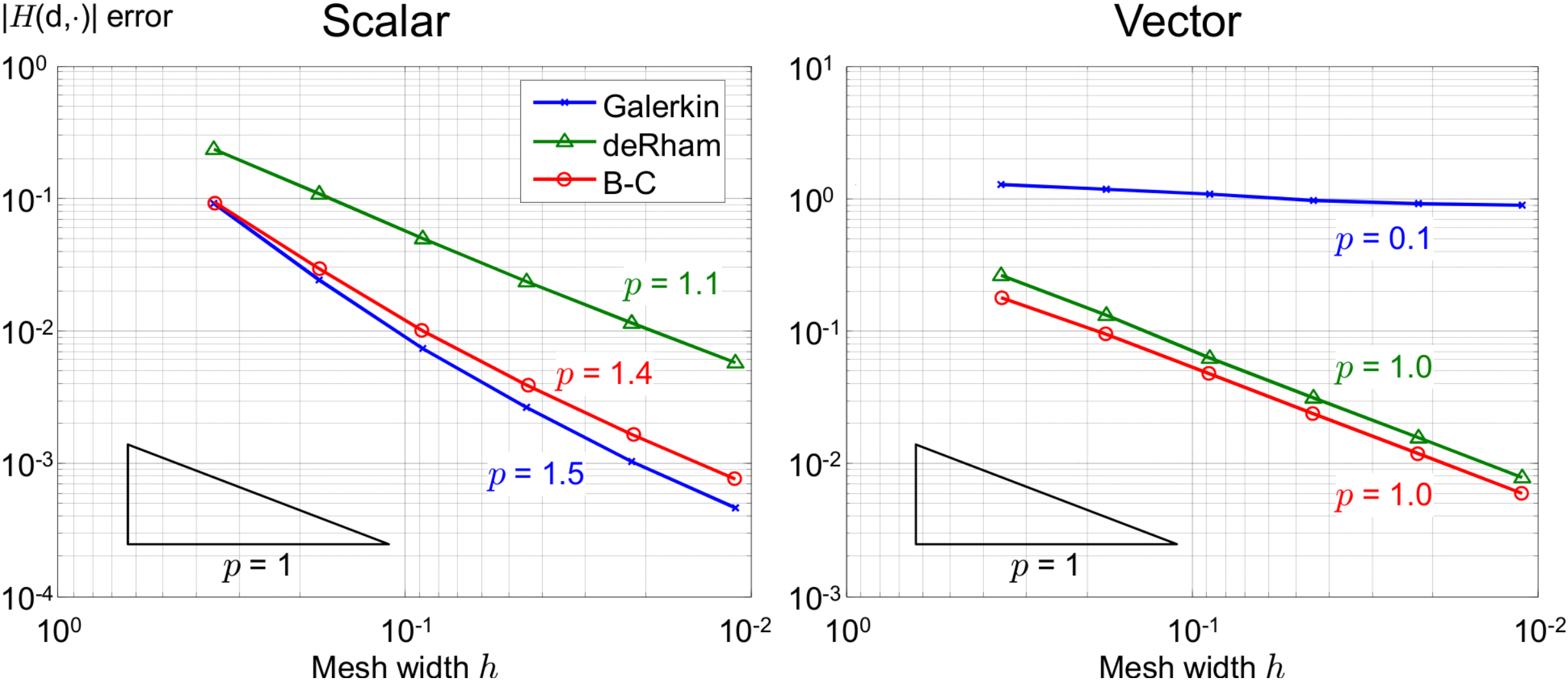}
\caption{\label{fig_Experiment_2_Hd}The initial data \eqref{initial} is set on the source mesh, and mapped back and forth once. Convergence of the error in the $H(\mathrm{d},\cdot)$ seminorm for uniform $h$-refinement is studied. Compared to the $L^2$ norms, one order of convergence is lost by differentiation. Not so in the vector case, where we may benefit from the commuting property. We actually observe the theoretical convergence rate $p=1$ for facet elements in this case.}
\end{figure}
Fig.~\ref{fig_Experiment_2_Hd} shows the $L^2$ convergence rates of the exterior derivatives, that is, the $H(\mathrm{d},\cdot)$ seminorm. For instance, if potential problems are considered, the seminorm measures the convergence of the field results. Nodal elements exhibit second order convergence. Therefore, for their derivative we expect at least first order convergence, and this is confirmed by the experiment. The same argument in the vector case yields a bounded error for the derivative. However, taking benefit from the commuting property we regain one order of convergence. In fact,
\begin{align*}
\mathrm{err}_h&=|(Q_{ij}^1Q_{ji}^1-\mathrm{Id})\boldsymbol{\omega}_i^1|_{H(\mathrm{d},\cdot)}
=||\mathrm{d}(Q_{ij}^1Q_{ji}^1-\mathrm{Id})\boldsymbol{\omega}_i^1||_{L^2}\\
&=||(Q_{ij}^2Q_{ji}^2-\mathrm{Id})\mathrm{d}\boldsymbol{\omega}_i^1||_{L^2}
=||(Q_{ij}^2Q_{ji}^2-\mathrm{Id})\boldsymbol{\omega}_i^2||_{L^2}\,,
\end{align*}
where we let $\boldsymbol{\omega}_i^2=\mathrm{d}\boldsymbol{\omega}_i^1$. This demonstrates that the convergence rate agrees with the $L^2$ convergence rate of facet elements, which is $p=1$.
\section{Conclusions}
After reviewing the state-of-the art of mesh coupling at nonconforming interfaces we introduced the Buffa-Christiansen complex, as well as projection-based mesh coupling operators whose Lagrange multiplier spaces are chosen as B-C spaces. This results in a theorem that states that B-C-based mesh coupling operators are commuting projectors of Whitney spaces. From the theoretical analysis and from the numerical experiment we conclude that the B-C-based approach combines the good properties of simple interpolation and of Galerkin projection: stability (Prop.~\ref{prop_infsup}), structure preservation (Thm.~\ref{thm_struct_preserv}), quasi-optimality (Prop.~\ref{prop_opt}), linear complexity (Sect.~\ref{sec_test_case}), and good accuracy (Sect.~\ref{sec_results}). Moreover, the construction relies on the topology of the mesh only, not on metric information (Remark \ref{rem_nometric}). A recent work paves the way for extension to higher polynomial order finite element differential forms \cite{Smirnova2013}.\par
%
\providecommand{\bysame}{\leavevmode\hbox to3em{\hrulefill}\thinspace}
\providecommand{\MR}{\relax\ifhmode\unskip\space\fi MR }
\providecommand{\MRhref}[2]{%
  \href{http://www.ams.org/mathscinet-getitem?mr=#1}{#2}
}
\providecommand{\href}[2]{#2}

\end{document}